\newcommand{\de}{\partial}
\newcommand{\vp}{\varphi}
\newcommand{\ve}{\varepsilon}
\def\diam{\mathrm{diam}}
\newcommand\R{\mathbb R^n}
\def\la{\lambda}
\def\Om{\Omega}
\def\Sym{\mathrm{Sym}}
\begin{document}
\newcounter{remark}
\newcounter{theor}
\setcounter{remark}{0}
\setcounter{theor}{1}
\newtheorem{claim}{Claim}
\newtheorem{theorem}{Theorem}[section]
\newtheorem{lemma}[theorem]{Lemma}
\newtheorem{corollary}[theorem]{Corollary}
\newtheorem{proposition}[theorem]{Proposition}
\newtheorem{question}{Question}[section]
\newtheorem{defn}[theorem]{Definition}
\newtheorem{remark}[theorem]{Remark}

\numberwithin{equation}{section}

\title{Liouville theorem for a class of Hessian equations}

\author{Jianchun Chu}
\address{School of Mathematical Sciences, Peking University, Yiheyuan Road 5, Beijing, P.R.China, 100871}
\email{jianchunchu@math.pku.edu.cn}

\author{S\l awomir Dinew}
\address{Faculty of Mathematics and Computer Science, Jagiellonian University 30-348 Krakow, {\L}ojasiewicza 6, Poland}
\email{slawomir.dinew@im.uj.edu.pl}

\begin{abstract}
In this paper, we study a general class of Hessian elliptic equations, including the Monge-Amp\`ere equation, the $k$-Hessian equation and $p$-Monge-Amp\`ere equations. We propose new additional condition on the solution and prove Liouville theorem under this assumption. We show that our general condition covers as special cases numerous sets of assumptions known in the literature which were tailored for specific equations. Thus we obtain a significant generalization of multiple isolated Liouville theorems and conditional interior estimates.
\end{abstract}

\maketitle

\section{Introduction}\label{introduction}
A classical result due to J\"orgens, Calabi and Pogorelov (see \cite{J,Ca,P} and \cite{CYa} for an alternative proof) deals with entire convex solutions to the equation
\begin{equation}\label{MA}
	\det\left(\frac{\de^2 u}{\de x_i\de x_j}\right)=1\ {\rm in}\ \mathbb R^n.
\end{equation}
The theorem states that any smooth convex solution of (\ref{MA}) has to be quadratic polynomial. This result was later extended by Caffarelli and Li to {\it viscosity} solutions - see \cite{CL}.

It is natural to try to generalize J\"orgens-Calabi-Pogorelov's theorem in a different direction and study the corresponding problem for other types of Hessian operators . It turns out that the problem is highly non-trivial and involves both the algebraic structure of the operator and the convexity properties of the {\it admissible} solutions. With the notable exception of the so called special Lagrangian equations with large phase (see \cite{Y2}), we are not aware of Hessian type operators for which such a theorem holds {\it unconditionally}.

Hence a question arises what are the {\it optimal} assumptions leading to such a Liouville type result for the problem
\begin{equation}\label{Hess}
F(D^2u)=1 \ {\rm in}\ \mathbb R^n,
\end{equation}
where $F$ is a nonlinear Hessian operator. There is a vast literature on the subject - see \cite{Y1,BCGJ,WY,CYu,LRW,CX,D,SY,WB,CDH,LR} and the references therein for a sample of results  under varying assumptions.

When it comes to the assumptions on $F$ {\it ellipticity} on a suitable cone of functions is indispensable. As virtually all known arguments rely on the Evans-Krylov theory the {\it concavity} of $F$ (or a suitable modification thereof) is also incorporated into the standard set of assumptions. As most Hessian type elliptic operators are {\it homogeneous}, this is also often assumed.

Moving to the conditions imposed on the solution itself, the most frequent ones are {\it quadratic growth}, i.e. one assumes a priori that the solution $u$ to the problem
satisfies $u(x)\geq C^{-1}|x|^2-C$ for some fixed constant $C>0$. This is a natural assumption that is related to blow-up profiles when one rescales such a Hessian equation around a fixed point. A much stronger condition is the {\it double quadratic growth}, i.e.
$$ C^{-1}|x|^2-C\leq u(x)\leq C|x|^2+C.$$
The latter one, coupled with suitable {\it interior a priori estimates} is often sufficient to obtain the corresponding Liouville theorem - see \cite{D,WB,CDH}. Needless to say, it is much harder to check in practice.

Another type of condition is the a priori assumption that $u$ is convex or at least has some stronger convexity properties than the generic admissible function associated to the operator $F$. The geometric nature of such an assumption simplifies a lot the reasoning - see \cite{BCGJ}. Again, it is a very restrictive condition.

The main goal of this paper is to introduce new additional assumption - condition D (see Definition \ref{conD}) on the solution. Our assumption is sufficiently general to cover virtually all such previous conditions in the literature and is still sufficient to establish crucial interior gradient and Hessian estimates. This estimate, coupled with higher order bounds, provides Liouville type theorem for solutions with quadratic growth.

Below we state our main technical device. We refer the reader to Section \ref{preliminaries} for all the relevant definitions.
\begin{theorem}\label{intgrad}
Assume that $\Gamma$ is a convex invariant cone and $F$ is a concave elliptic Hessian operator satisfying condition N on $\Gamma$. Fix a bounded domain $\Omega$ in $\R$ and let the smooth admissible function $u$ solve the problem
	\begin{equation}\label{key1}
		\begin{cases}
			\ F(D^{2}u) = 1 & \mbox {in $\Omega$}, \\
			\ u = 0 & \mbox{on $\de\Omega$}.
		\end{cases}
\end{equation}
Assume furthermore that the function $u$ satisfies condition D. Then there exist constants $\alpha$, $\beta$ and $C$ depending only on $F$, $N_{1}$, $N_{2}$, $D_{1}$, $D_{2}$ and $\diam(\Omega)$ such that
\[
\sup_{\Omega}\left((-u)+(-u)^{\alpha}|Du|+(-u)^{\beta}|D^{2}u|\right) \leq C.
\]
\end{theorem}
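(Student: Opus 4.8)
The plan is to establish the three bounds in sequence — first the $C^0$ bound $\sup_\Omega(-u)\le C$, then the gradient bound, then the Hessian bound — since each weighted estimate naturally feeds on the previous one. The $C^0$ bound should follow from a comparison principle: using ellipticity of $F$ on $\Gamma$ together with $F(D^2u)=1$, one compares $u$ against a suitable quadratic barrier $w=\frac{A}{2}(|x-x_0|^2-R^2)$ with $R=\diam(\Omega)$, choosing $A$ so large that $F(D^2w)\ge 1$ and $w$ lies in $\Gamma$; then $u\ge w$ on $\Omega$ gives the lower bound on $u$ in terms of $\diam(\Omega)$ and $F$. (An upper bound $u\le 0$ is immediate from admissibility and the maximum principle applied to the concave operator, since $F(D^2 u)=1>0$ forces $u$ to attain its max on the boundary.)

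For the gradient estimate I would use the standard Pogorelov-type test function adapted to the Dirichlet problem. The idea is to maximize a quantity of the form
\[
\Phi = \log\bigl((-u)^{\gamma}|Du|^2\bigr) + \text{(auxiliary lower-order terms)}
\]
over $\Omega$; at an interior maximum one differentiates twice, contracts with the linearized operator $L=F^{ij}\partial_i\partial_j$, and uses $F^{ij}>0$ (ellipticity), the concavity of $F$ (to control the third-order terms coming from differentiating the equation), and — crucially — condition N on $\Gamma$ and condition D on $u$ to absorb the terms that are not a priori sign-definite. The weight $(-u)^{\gamma}$ vanishes on $\partial\Omega$, so the maximum of $\Phi$ is either interior or the bound is trivial; this localizes the estimate. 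The exponent $\alpha$ in the statement is then read off from $\gamma$, and the constant $C$ depends on the structural constants $N_1,N_2,D_1,D_2$ together with the $C^0$ bound just obtained and $\diam(\Omega)$.

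The Hessian estimate is the heart of the matter and proceeds along the same lines but is substantially more delicate. One maximizes
\[
\widetilde\Phi = \log\lambda_{\max}(D^2u) + \text{(weight in $-u$)} + \text{(gradient correction)},
\]
where $\lambda_{\max}$ is the largest eigenvalue of the Hessian; after a rotation making $D^2u$ diagonal at the maximum point and differentiating $\widetilde\Phi$ twice, one contracts with $L$. The positive third-order terms must be controlled: the concavity of $F$ yields a good negative term $-F^{ij,kl}u_{ij p}u_{kl p}\ge 0$, and one needs the Andrews–Gerhardt–type inequality together with condition N to extract from it enough to dominate the bad terms $\sum_i F^{ii}(u_{11i})^2/\lambda_{\max}^2$ arising from differentiating $\log\lambda_{\max}$. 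This is where condition D enters decisively: it must be precisely what is needed to bound the remaining uncontrolled combination of $F^{ii}$ and Hessian entries, and the exponent $\beta$ is chosen so that the weight $(-u)^{\beta}$ both vanishes on $\partial\Omega$ and produces, upon differentiation, a term of the right strength to close the inequality. I expect this Hessian step — specifically, making condition D interact correctly with the concavity inequality to absorb all third-order and curvature error terms — to be the main obstacle; the $C^0$ and gradient steps are comparatively routine once the right barrier and test function are in place.
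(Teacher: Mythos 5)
Your overall strategy (sequential $C^0\to C^1\to C^2$ estimates, each using the previous bound, with weighted test functions of the form $(-u)^{\gamma}|Du|^2$ and $\log\lambda_{\max}+(\text{weight})$) matches the paper's, as does the $C^0$ comparison argument and the use of the Andrews--Gerhardt-type inequality $-F^{1i,i1}=(F^{ii}-F^{11})/(\lambda_1-\lambda_i)$ for the third-order terms. However, you have misplaced the role of condition D, and this is not a cosmetic point.

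You write that condition D ``enters decisively'' in the Hessian estimate, as what is ``needed to bound the remaining uncontrolled combination of $F^{ii}$ and Hessian entries.'' In the paper this is explicitly \emph{not} the case: a remark after the $C^2$ proposition emphasizes that the interior Hessian bound ``holds without assuming condition D on $u$. What really matters though is the a priori knowledge of the interior gradient estimate.'' The $C^2$ estimate only uses condition N, concavity, and the previously established gradient bound (which is fed in through the auxiliary term $\varphi((-u)^{\gamma}|Du|^2)$ in the test function, where boundedness of the argument is essential so that $\varphi$ is well-defined and $\varphi''=3(\varphi')^2$ can be exploited). The $C^2$ step then splits into two cases ($\lambda_n<-\varepsilon\lambda_1$ or not), and in each case condition N delivers a lower bound on $F^{ii}$ once $\sum_i F^{ii}$ is controlled — no appeal to condition D occurs.

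By contrast, condition D is used precisely, and essentially, in the \emph{gradient} estimate, where you described its role vaguely. The mechanism is quite specific: at the maximum of $Q=\tfrac12\log|Du|^2+\alpha\log(-u)+\tfrac{u^2}2+\tfrac{|x|^2}2$, for the indices $i$ where $u_i^2$ is a definite fraction of $|Du|^2$, the first-order condition $Q_i=0$ pins $\lambda_i$ to be comparable to $|Du|^2/(-u)$, hence large; then condition D supplies $F^{ii}\lambda_i\le D_2$, which is exactly what is needed to dominate the bad term $3F^{ii}u_i^2\lambda_i^2/|Du|^4$ for those indices. This is the single place where D is used, and without it the gradient estimate would not close. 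You should rearrange your argument so that D is invoked through the first-order critical-point equation in the $C^1$ step, and so that the $C^2$ step relies instead on the gradient bound and condition N alone.
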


Some comments are in order here. A gradient estimate by Chou and Wang - (\cite{CW}, see also \cite{Tr} for an alternative proof) shows that for the $k$- Hessian operator one has an a priori $C^1$ interior bound
$$|Du(x)|\leq \frac{C}{{\mathrm{dist}}(x,\de\Omega)},\ \ x\in\Omega$$
for a constant $C$ depending only on the oscillation $\mathrm{osc}(u)$, $k$ and $n$. The major difference is that the distance to the boundary is measured by the Euclidean distance, whereas in our setting it is measured by the decay of the solution $u$. Gradient estimates for other Hessian equations mimicking Chou-Wang's result can be found in \cite{D,Ch}. To the contrary, almost nothing is known about interior gradient estimates based on the decay of the solution (without imposing additional assumptions). In fact such bounds are known only under very restrictive additional conditions such as convexity - see \cite{BCGJ}.

The difference may seem insignificant at first sight but the gradient estimate is then coupled with higher order bounds to obtain the Liouville theorem. A second order estimate of Chou and Wang - \cite{CW} (still for the $k$-Hessian equation) states that $C^4$-admissible solution $u$ to (\ref{key1}), vanishing on $\de\Omega$ obeys the bound
$$(-u(x))^\beta|D^2 u(x)|\leq C,\ \ x\in\Omega$$
with $\beta>1$ and $C$, depending only on $k$, $n$ and the gradient bound of $u$. This estimate has also been shown to work for many Hessian equations (see \cite{LRW,CX,D,CDH,LR}). In turn, an unconditional interior second order estimate depending on the Euclidean distance to the boundary seems out of reach in general. In this direction, one of the most interesting works is the recent paper \cite{GQ} where such a bound was proven for $2$-Hessian equation under the additional assumption that $\sigma_3(D^2u)\geq-A$ for a positive constant $A$.

Coupling the gradient and Hessian estimates with a suitable Evans-Krylov $C^{2,\gamma}$ estimates is then sufficient to prove the Liouville property if double quadratic growth is assumed - see \cite{WB}. The argument breaks down if mere quadratic growth is assumed - the problem is exactly that the different types of estimates of first and second order do not pair well and the oscillation of a (suitably rescaled) $u$ in a unit ball becomes uncontrollable.

By Theorem \ref{intgrad}, standard argument leads to the following theorem:
\begin{theorem}\label{lio}
Assume that $\Gamma$ is a convex invariant cone and $F$ is a concave elliptic Hessian operator satisfying condition N on $\Gamma$. If an entire smooth admissible function $u$ has quadratic growth, solves
$$F(D^2u)=1\ {\rm in}\ \R.$$
and furthermore satisfies condition D, then it is a quadratic polynomial.
\end{theorem}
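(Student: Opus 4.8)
\subsection*{Proof proposal}

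The plan is to deduce Theorem~\ref{lio} from the a priori estimate of Theorem~\ref{intgrad} by a parabolic rescaling of the sublevel sets of $u$. Fix an arbitrary point $x_{0}\in\R$. By the quadratic growth assumption there is a constant $C_{0}>0$ with $u(x)\geq C_{0}^{-1}|x|^{2}-C_{0}$, so for every $R>0$ the open set $\{u<R\}$ is contained in the ball $B_{\sqrt{C_{0}(R+C_{0})}}$. Let $\Omega_{R}$ be the connected component of $\{u<R\}$ containing $x_{0}$; since $\R$ is locally connected, $\Omega_{R}$ is a bounded domain with $\partial\Omega_{R}\subset\{u=R\}$ and $\diam(\Omega_{R})\leq 2\sqrt{C_{0}(R+C_{0})}$. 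Set $\rho_{R}:=\diam(\Omega_{R})\in(0,\infty)$ and define
\[
u_{R}(y):=\frac{1}{\rho_{R}^{2}}\bigl(u(\rho_{R}y)-R\bigr),\qquad y\in\widetilde\Omega_{R}:=\rho_{R}^{-1}\Omega_{R}.
\]
Then $\diam(\widetilde\Omega_{R})=1$, a direct computation gives the matrix identity $D^{2}u_{R}(y)=(D^{2}u)(\rho_{R}y)$, and therefore $u_{R}$ is a smooth admissible solution of $F(D^{2}u_{R})=1$ in $\widetilde\Omega_{R}$ with $u_{R}=0$ on $\partial\widetilde\Omega_{R}$ and $u_{R}<0$ inside. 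Since condition D is a (scale invariant) pointwise condition on the Hessian of the solution, $u_{R}$ again satisfies condition D with the same constants $D_{1},D_{2}$; likewise $F$ still satisfies condition N on $\Gamma$ with the same $N_{1},N_{2}$.

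Next I would apply Theorem~\ref{intgrad} to $u_{R}$ on $\widetilde\Omega_{R}$. Because $\diam(\widetilde\Omega_{R})=1$, the resulting constant, call it $C_{1}$, depends only on $F,N_{1},N_{2},D_{1},D_{2}$, hence is independent of $R$ and of $x_{0}$:
\[
\sup_{\widetilde\Omega_{R}}\Bigl((-u_{R})+(-u_{R})^{\alpha}|Du_{R}|+(-u_{R})^{\beta}|D^{2}u_{R}|\Bigr)\leq C_{1}.
\]
Evaluate this at $y_{0}:=\rho_{R}^{-1}x_{0}\in\widetilde\Omega_{R}$. On one hand $-u_{R}(y_{0})=\rho_{R}^{-2}\bigl(R-u(x_{0})\bigr)$, and since $\rho_{R}^{2}\leq 8C_{0}R$ and $R-u(x_{0})\geq R/2$ whenever $R\geq 2|u(x_{0})|+2C_{0}$, we obtain $-u_{R}(y_{0})\geq (16C_{0})^{-1}=:c_{0}>0$ with $c_{0}$ independent of $R$ and $x_{0}$. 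On the other hand $D^{2}u_{R}(y_{0})=(D^{2}u)(x_{0})$. Combining these with the displayed estimate (and $\beta>0$) gives $c_{0}^{\beta}\,|D^{2}u(x_{0})|\leq C_{1}$, that is $|D^{2}u(x_{0})|\leq C_{1}c_{0}^{-\beta}=:\Lambda$ with $\Lambda$ independent of $x_{0}$. Letting $x_{0}$ vary we conclude $\sup_{\R}|D^{2}u|\leq\Lambda$.

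Finally I would upgrade this uniform Hessian bound to the conclusion via the Evans--Krylov theory, exactly the ``standard argument'' alluded to before the statement. Since $D^{2}u$ is admissible, bounded by $\Lambda$, and satisfies $F(D^{2}u)=1$, its range lies in a compact subset of $\Gamma$ (here one uses that $F$ degenerates on $\partial\Gamma$, so the level set $\{F=1\}$ stays a definite distance away from $\partial\Gamma$); on a neighbourhood of that compact set $F$ is smooth, concave and uniformly elliptic. The interior $C^{2,\gamma}$ estimate for the concave uniformly elliptic equation $F(D^{2}u)=1$, applied to the further rescaling $w_{\rho}(y):=\rho^{-2}u(\rho y)$ (which preserves the equation and satisfies $\|D^{2}w_{\rho}\|_{L^{\infty}(B_{2})}\leq\Lambda$), yields $[D^{2}w_{\rho}]_{C^{\gamma}(B_{1})}\leq C$ uniformly in $\rho$, i.e. $[D^{2}u]_{C^{\gamma}(B_{\rho})}\leq C\rho^{-\gamma}$ for all $\rho$. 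Letting $\rho\to\infty$ forces $D^{2}u$ to be constant, and integrating twice shows that $u$ is a quadratic polynomial.

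The one genuinely non-routine point in this scheme is the claim that condition D is invariant, with unchanged constants $D_{1},D_{2}$, under subtracting a constant from $u$ and under the parabolic rescaling $u\mapsto\rho^{-2}u(\rho\,\cdot)$; this invariance is precisely what neutralizes the dependence of the constant in Theorem~\ref{intgrad} on $\diam(\Omega)$, and it should be immediate from Definition~\ref{conD} since that condition only involves $D^{2}u$ together with scale invariant lower order data. Everything else is the classical blow-down normalization (rescale the sublevel set to unit diameter), one invocation of the conditional interior estimate, and a concluding Evans--Krylov plus rescaling argument; one should also double-check that condition N guarantees the degeneration of $F$ on $\partial\Gamma$ used in the last paragraph, or else invoke whatever structure condition the preliminaries provide for this.
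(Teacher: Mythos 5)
Your proposal is correct and follows the same strategy as the paper's own proof: rescale the sublevel sets of $u$ to a fixed-diameter domain with zero boundary data, apply Theorem~\ref{intgrad} there (the scale-invariance of condition D making the constants uniform), deduce a global bound $\|D^2u\|_{L^\infty(\R)}\le\Lambda$, and then conclude by Evans--Krylov plus a final dilation forcing $[D^2u]_{C^\gamma}$ to vanish. The only cosmetic difference is that you normalize via $\rho_R=\diam(\Omega_R)$ (which then requires the auxiliary estimate $\rho_R^2\lesssim R$), whereas the paper rescales directly by $R$ so that the diameter bound $\diam(\Omega_R)\le 2\sqrt{C(C+1)}$ follows at once from quadratic growth, and the pointwise bound is read off on the half-sublevel set $\{v_R\le -1/2\}$ rather than at the single rescaled point $y_0$.
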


As an application, we now list the following corollaries:

\begin{corollary}\label{Liouville k Hessian}
Let $u$ be a smooth entire $k$-convex function. If $u$ has quadratic growth, solves
$$\sigma_k(D^2u)=1\ {\rm in}\ \mathbb R^n.$$
and is furthermore $(k+1)$-convex, then it is a quadratic polynomial. The statement still holds if there is a uniform constant $A$ such that for any $x\in \R$, $\sigma_{k+1}(D^2u(x))\geq-A$.
\end{corollary}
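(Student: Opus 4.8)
The plan is to deduce Corollary \ref{Liouville k Hessian} from Theorem \ref{lio} by verifying that the $k$-Hessian operator $F(D^2u) = \sigma_k(D^2u)^{1/k}$, on the cone $\Gamma_k$, satisfies all the hypotheses of that theorem, and that the additional convexity-type assumptions on $u$ imply condition D. Concretely, I would first recall that $\Gamma_k$ is a convex invariant cone and that $F = \sigma_k^{1/k}$ is a concave elliptic Hessian operator on $\Gamma_k$ (standard facts from the Gårding theory of hyperbolic polynomials, going back to Caffarelli--Nirenberg--Spruck). The normalization by the $1/k$ power is harmless since $\sigma_k(D^2u) = 1$ is equivalent to $F(D^2u) = 1$. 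Next I would verify condition N for $\sigma_k^{1/k}$ on $\Gamma_k$; this is the content of the relevant lemma in Section \ref{preliminaries}, and for the $k$-Hessian it reduces to the classical quantitative ellipticity and the behavior of $\sigma_k$ near the boundary of $\Gamma_k$, which produces the structural constants $N_1, N_2$.

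The heart of the argument is to check that the hypothesis ``$u$ is $(k+1)$-convex'' (or, more generally, $\sigma_{k+1}(D^2u) \geq -A$) forces $u$ to satisfy condition D. I would open Definition \ref{conD} and show that one can take $D_1, D_2$ depending only on $k$, $n$ (and on $A$ in the second case). The key algebraic input is a Newton--Maclaurin type inequality: on $\Gamma_k$, a lower bound on $\sigma_{k+1}(D^2u)$ together with the normalization $\sigma_k(D^2u) = 1$ controls precisely the quantity appearing in condition D — morally, it prevents the eigenvalues of $D^2u$ from spreading out too much and keeps the relevant auxiliary function (a combination of $\sigma_k$, $\sigma_{k+1}$ and the lower-order $\sigma_j$'s) bounded along admissible solutions. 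When $u$ is genuinely $(k+1)$-convex this is the case $A = 0$; the uniform bound $\sigma_{k+1}(D^2u(x)) \geq -A$ is the natural relaxation, paralleling the role played by the assumption $\sigma_3(D^2u) \geq -A$ in \cite{GQ}, and it enters condition D only through $D_1, D_2$.

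Once condition D is in hand, the quadratic growth hypothesis is exactly what Theorem \ref{lio} requires, so I would simply invoke that theorem to conclude that $u$ is a quadratic polynomial. For the local statement behind the entire one, Theorem \ref{intgrad} already supplies the interior gradient and Hessian estimates with constants depending only on $F$, $N_1$, $N_2$, $D_1$, $D_2$ and $\diam(\Omega)$; combined with Evans--Krylov $C^{2,\gamma}$ estimates and the standard blow-down argument under quadratic growth (as recalled in the discussion preceding Theorem \ref{lio}), this yields the Liouville property. The main obstacle I anticipate is purely the verification that $(k+1)$-convexity, or its relaxed form $\sigma_{k+1} \geq -A$, really does imply condition D with admissible constants: this requires identifying the correct Newton--Maclaurin / Gårding inequality on $\Gamma_k$ and tracking how the constant $A$ propagates into $D_1$ and $D_2$, rather than any difficulty with the structural hypotheses on $F$, which are classical.
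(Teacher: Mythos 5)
Your proposal matches the paper's own argument: verify that $(\Gamma_k,\sigma_k^{1/k})$ is a concave elliptic Hessian operator satisfying condition N (Remark \ref{condition N remark}, via the Harvey--Lawson result on G\aa rding--Dirichlet polynomials), show that $(k+1)$-convexity or $\sigma_{k+1}(D^2u)\geq -A$ implies condition D by a Newton-inequality argument (Propositions \ref{khess} and \ref{CNSimplyD}), and then invoke Theorem \ref{lio}. The only cosmetic difference is that the paper routes the deduction of condition D through the geometric intermediate condition CNS, whereas you describe going straight to condition D, but the underlying algebraic input (Newton's inequality on $\Gamma_k$ controlling the eigenvalue spread) is the same.
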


\begin{corollary}\label{Liouville p MA}
Let $u$ be a smooth entire $p$-plurisubharmonic function. If $u$ has quadratic growth, solves
$$\mathcal M_p(D^2u)=1\ {\rm in}\ \mathbb R^n.$$
and is furthermore $(p-1)$-plurisubharmonic, then it is a quadratic polynomial. The statement still holds if there is a uniform constant $A$ such that for any $x\in\R$,
$$\la_{i_1}(D^2u(x))+\cdots+\la_{i_{p-1}}(D^2u(x))\geq -A$$
for any $(p-1)$-tuple of indices $1\leq i_1<i_2<\cdots<i_{p-1}\leq n$.
\end{corollary}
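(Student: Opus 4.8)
The plan is to verify that the hypotheses of Corollary \ref{Liouville p MA} are a special case of those in Theorem \ref{lio}, so that the conclusion follows immediately. Concretely, I would proceed in three stages: (i) identify the operator $F$, the cone $\Gamma$, and check that $F$ is a concave elliptic Hessian operator; (ii) check that the $p$-Monge–Ampère operator $\mathcal{M}_p$ satisfies condition N on the relevant cone; and (iii) show that each of the two stated extra hypotheses — namely that $u$ is $(p-1)$-plurisubharmonic, and the weaker one-sided bound on the sums of $(p-1)$ eigenvalues — implies condition D (as defined in Definition \ref{conD}). Once these are in place, Theorem \ref{lio} applies verbatim and forces $u$ to be a quadratic polynomial.

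For stage (i), I would take $F(D^2u) = \mathcal{M}_p(D^2u)^{1/\binom{n}{p}}$ (or the appropriately normalized root) with $\Gamma$ the cone of symmetric matrices all of whose $p$-fold partial sums of eigenvalues are positive — this is exactly the $p$-plurisubharmonic cone — which is convex and invariant under the relevant symmetry group; ellipticity on this cone and concavity of the normalized operator are classical (this is the Gårding-type theory for $\mathcal{M}_p$, analogous to the hyperbolic-polynomial argument for $\sigma_k$). For stage (ii), condition N is a structural hypothesis on $F$; I expect that $\mathcal{M}_p$ satisfies it by the same reasoning that makes $\sigma_k$ satisfy it, and I would cite or adapt the verification of condition N for $\sigma_k$ given earlier in the paper, pointing out that the Gårding cone structure is identical.

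Stage (iii) is where the real content lies, and it is the step I expect to be the main obstacle. The idea is that condition D should be an inequality bounding some symmetric function of $D^2u$ (e.g. a "next" partial sum, or a lower-order quotient) from below along the solution. The $(p-1)$-plurisubharmonicity hypothesis says that every sum $\la_{i_1}+\cdots+\la_{i_{p-1}} \ge 0$; the weaker hypothesis says the same sums are $\ge -A$. I would show that, after possibly translating by a fixed quadratic polynomial or rescaling, either of these implies the defining inequality of condition D with constants $D_1, D_2$ depending only on $p$, $n$ and $A$. The delicate point is matching the precise algebraic form of condition D: one must express the $(p-1)$-plurisubharmonic bound in terms of whatever symmetric polynomial appears in Definition \ref{conD} and control it uniformly, using the constraint $\mathcal{M}_p(D^2u)=1$ to convert the sign condition into the required quantitative bound. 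I would handle the "$\ge -A$" case by the standard device of absorbing $A$ into the constants, noting that the solution on a large ball can be shifted so that the strict-positivity version applies up to an error controlled by $A$ and the diameter.

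With stage (iii) complete, the corollary is immediate: $u$ is a smooth entire $p$-plurisubharmonic (hence admissible) function of quadratic growth solving $\mathcal{M}_p(D^2u)=1$ and satisfying condition D, so Theorem \ref{lio} gives that $u$ is a quadratic polynomial. I would close by remarking that Corollary \ref{Liouville k Hessian} is proven by the identical scheme with $\sigma_k$ in place of $\mathcal{M}_p$ and "$k$-convex / $(k+1)$-convex" in place of the plurisubharmonicity conditions, so the two corollaries are genuinely parallel instances of the general result.
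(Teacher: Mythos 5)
Your overall strategy is exactly the paper's: reduce to Theorem~\ref{lio} by verifying (i) that $\mathcal M_p^{1/\binom{n}{p}}$ on $\hat\Gamma_p$ is a concave elliptic Hessian operator, (ii) that it satisfies condition~N, and (iii) that the extra eigenvalue hypotheses imply condition~D. Stages (i)–(ii) are correct, and indeed the paper gets condition~N for $\mathcal M_p$ and $\sigma_k$ by the same route, via Proposition~\ref{k Hessian p MA condition N} together with the Harvey--Lawson result in Remark~\ref{condition N remark}.

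Where your plan deviates is stage~(iii), and there are two points worth flagging. First, the paper does \emph{not} verify condition~D directly from the $(p-1)$-plurisubharmonicity; it routes through the purely geometric condition~CNS (Definition~\ref{conCNS}) and Proposition~\ref{CNSimplyD}. For $\mathcal M_p$ this makes the check trivial: if $\lambda\in\hat\Gamma_p$ and every sum of $p-1$ of the $\lambda_j$'s is $\ge -A$, then replacing any one coordinate by $R>A$ keeps all $p$-fold sums positive, i.e.\ $(\lambda',R)\in\hat\Gamma_p$. No use of the constraint $\mathcal M_p(D^2u)=1$ or of the specific function $f$ is needed at this step; attempting to match ``the precise algebraic form of condition~D'' head-on is unnecessarily hard. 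Second, your proposal to handle the ``$\ge -A$'' case by ``translating by a fixed quadratic polynomial'' is problematic: adding $\tfrac{A}{2}|x|^2$ to $u$ would restore $(p-1)$-plurisubharmonicity but destroys the equation $\mathcal M_p(D^2u)=1$, so you cannot then apply the theorem. The correct (and simpler) treatment, as in Proposition~\ref{p-1}, is to absorb $A$ into the CNS constant $R$ rather than into the solution.
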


When $u$ is assumed to be $(k+1)$-convex, Corollary \ref{Liouville k Hessian} was proved by Li, Ren and Wang - see \cite{LRW}. When $k=2$, Corollary \ref{Liouville k Hessian} was proved by Chen and Xi - see \cite{CX}. The key ingredient in both papers are the a priori estimates (i.e. Theorem \ref{intgrad}). Note that our method is totally different from that of \cite{LRW,CX}, where they established $C^{2}$ estimate directly and the structure of $\sigma_{k}^{1/k}$ plays a crucial role. The assumption of Theorem \ref{intgrad} is quite general. It can be shown that $\sigma_{k+1}(D^2u(x))\geq-A$ implies that $u$ satisfies condition D - see Proposition \ref{khess}. In the proof of Theorem \ref{intgrad}, combining condition D and a series of delicate calculations, we first establish the gradient estimate, then use it as part of auxiliary function and finally prove $C^{2}$ estimate. Such method not only works for a general class of Hessian equations, but also simplifies the argument in some extent and allows us to generalize the results of \cite{LRW,CX}.

\bigskip

The note is organized as follows: in Section \ref{preliminaries}, we gather the useful notions and classical results. The Condition D and its relation to previous assumptions is discussed in Section \ref{condition D}. The proof of Theorem \ref{intgrad} is given in \ref{main estimate}. In Section \ref{Liouville theorem}, we prove Theorem \ref{lio} and its corollaries.

\bigskip

{\bf Acknowledgements.}
The first-named author was partially supported by the Fundamental Research Funds for the Central Universities, Peking University. The second-named author was partially supported by grant no. 2021/41/B/ST1/01632 from the National Science Center, Poland.

\section{Preliminaries}\label{preliminaries}
Here we gather the necessary definitions and results needed later on.

\subsection{Convex invariant cones and admissible functions}
By $\Sym^2(\R)$ we denote the space of symmetric $n\times n$ matrices - the space where the pointwise Hessians of $C^2$ functions live. For any $A\in \Sym^2(\R)$, by $\la(A):=(\la_1(A),\cdots,\la_n(A))$ we denote the vector of eigenvalues (the ordering of $\la_i(A)$'s, unless otherwise stated, is arbitrary). If the choice of the matrix $A$ in question is clear from the context we shall also use the notation $\la=(\la_1,\cdots,\la_n)$.

Fix $\Gamma\subset\R$ - an open cone with vertex at the coordinate origin. The basic example of these are the cones
\begin{equation}\label{gammak}
\Gamma_k:=\lbrace \la=(\la_1,\cdots,\la_n) \ | \ \sigma_j(\la)>0,\ j=1,\cdots,k\rbrace,
\end{equation}
where $\sigma_j$ is the $j$-th elementary symmetric polynomial on the coordinates:
\[
\sigma_j(\la):=\sum_{1\leq i_1<i_2<\cdots<i_j\leq n}\la_{i_1}\cdots\la_{i_j}.
\]
Note that $\Gamma_n$ is the positive orthant, while $\Gamma_1$ is a half-space containing $\Gamma_n$. Below we recall the notion of an invariant cone:
\begin{defn}\label{ST}
An open cone $\Gamma\subset\R$ is said to be invariant if the following conditions hold:
\begin{itemize}\setlength{\itemsep}{1mm}
\item (Positivity) $\Gamma+\Gamma_n\subset\Gamma$;
\item (Invariance) $\Gamma$ is invariant under permutation of coordinates.
\end{itemize}
If furthermore $\Gamma$ is convex, we call it a convex invariant cone.
\end{defn}
\begin{remark} Harvey and Lawson in \cite{HL} considered the so-called ST-subequations. These are the subcones $\mathcal C$ in $\Sym^2(\R)$ that are invariant under a transitive subgroup of $O(n)$ and satisfy $\mathcal C+\mathcal P\subset\mathcal P$, where $\mathcal P$ is the cone of positive definite symmetric matrices. It is straightforward to see that for any such ST-subequation $\mathcal C$ the set
\[
\la(\mathcal C):=\lbrace\la(A)\in\mathbb{R}^{n} \ | \ A\in\mathcal C\rbrace
\]
is an invariant cone in $\R$. Thus the Harvey-Lawson theory of ST-subequations  provides a rich source of examples of invariant cones.
\end{remark}
All the cones we shall consider will be convex invariant cones. Note that this forces the inclusion
$\Gamma\subset\Gamma_1$. It is in particular well-known that all cones $\Gamma_k$, $k=1,\cdots,n$ are of this type. For more examples we refere to \cite{HL}, where the basic theory of such cones is discussed.

Coupled with such cones is the notion of an {\it admissible function}:
\begin{defn}\label{admissible}
Let $\Gamma$ be a convex invariant cone. A $C^2$-smooth function $u$ is said to be admissible (with respect to $\Gamma$) if for any $x$ in its domain, the Hessian $D^2u(x)$ has a vector of eigenvalues belonging to $\Gamma$.
\end{defn}

Note that by the invariance it doesn't matter what the order of the eigenvalues is. A simple observation is that admissible functions (for a fixed $\Gamma$) form themselves a convex cone. The inclusion $\Gamma\subset\Gamma_1$ implies that admissible functions are subharmonic, i.e. $\Delta u>0$.

\subsection{Elliptic Hessian operator}
Next we define a class of Hessian operators. These are the operators that depend only on the eigenvalues of the Hessian of the function and are {\it elliptic} whenever restricted to the class of admissible functions.
\begin{defn}\label{Hesoper}
Let $\Gamma$ be a convex invariant cone. The map
\[
F: \lbrace A\in \Sym^2(\R)\ |\ \la(A)\in \Gamma\rbrace \longmapsto \mathbb R
\]
is said to be an elliptic Hessian operator if the following holds:
\begin{enumerate}\setlength{\itemsep}{1mm}
\item (Dependence on eigenvalues only): there is a $C^2$ smooth function $f:\Gamma\longmapsto \mathbb R$ such that
$$F(A)=f(\la_1(A),\cdots,\la_n(A)),$$
where $\la_i(A)$ denote the eigenvalues of $A$;
\item (Ellipticity) $\frac{\de f}{\de\lambda_{i}}(\lambda) > 0, \ \lambda\in\Gamma, \ i=1,\cdots,n.$

\item (Definiteness) $F(A)>0$ for symmetric matrices $A$ with vector of eigenvalues in ${\Gamma}$.

\item  (Homogeneity) $F(tA)=tF(A)$ whenever $t>0$ and $A$ has an eigenvalue vector in $\Gamma$.
\end{enumerate}	
\end{defn}

\begin{remark}\label{remark}
As we already mentioned above, the homogeneity is a typical feature of most Hessian operators of interest but is sometimes excluded from the definition. We have included it into the definition as all the operators we deal with share this property. Note also that the ellipticity condition is imposed on $f$, but it implies that $F(A+P)>F(A)$, whenever $A$ and $P$ are symmetric matrices with vector of eigenvalues in $\Gamma$ and $\Gamma_n$, respectively.
\end{remark}
Examples of pairs $(\Gamma,F)$ include:
\begin{itemize}\setlength{\itemsep}{2mm}
	\item $(\Gamma_n,\det(A)^{1/n})$ - the Monge-Amp\`ere operator;
	\item $(\Gamma_k, \sigma_k(\la(A))^{1/k})$ - the $k$-Hessian operator;
	\item $(\Gamma_k, \left(\frac{\sigma_k(\la(A))}{\sigma_l(\la(A))}\right)^{1/(k-l)})$ for $1\leq l<k\leq n$ - the Hessian quotient operator;
	\item $(\hat{\Gamma}_p,\mathcal M_p(A)^{1/\binom np})$ - the $p$-Monge-Amp\`ere operator,
	where
	$$\hat{\Gamma}_p:=\lbrace \la\in\R\ |\  \la_{i_1}+\cdots+\la_{i_p}>0 \ \text{for any $1\leq i_1<\cdots<i_p\leq n$} \rbrace$$
	 and
$$\mathcal M_p(A)=\Pi_{1\leq i_1<\cdots<i_p\leq n}\left(\la_{i_1}(A)+\cdots+\la_{i_p}(A)\right).$$
\end{itemize}
We refer to \cite{CNS} and especially to \cite{HL} for many more such examples.

We also recall the following nice property of homogeneous operators:
\begin{lemma}\label{homog}
Assume that $\Gamma$ is a convex invariant cone and $F$ is an elliptic Hessian operator on $\Gamma$. If $\lambda\in\Gamma$, then
\begin{equation}\label{homog eqn}
\sum_{i=1}^n \frac{\de f}{\de\lambda_{i}}(\lambda)\la_i = f(\la).
\end{equation}
\end{lemma}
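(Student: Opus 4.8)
The statement is the infinitesimal (Euler) form of the homogeneity $F(tA)=tF(A)$, so the plan is simply to differentiate this identity in $t$ at $t=1$. First I would fix $\lambda\in\Gamma$ and choose any symmetric matrix $A$ with eigenvalue vector $\lambda$ — for instance the diagonal matrix $A=\mathrm{diag}(\lambda_1,\dots,\lambda_n)$ — so that $F(A)=f(\lambda)$. Since $\Gamma$ is a cone, $tA$ has eigenvalue vector $t\lambda\in\Gamma$ for every $t>0$, hence $t\mapsto F(tA)=f(t\lambda_1,\dots,t\lambda_n)$ is a well-defined $C^2$ function of $t$ on $(0,\infty)$, and the homogeneity relation $F(tA)=tF(A)=tf(\lambda)$ holds identically there.

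Next I would differentiate both sides with respect to $t$. The right-hand side gives $f(\lambda)$. For the left-hand side, the chain rule applied to $t\mapsto f(t\lambda_1,\dots,t\lambda_n)$ yields $\sum_{i=1}^n \frac{\de f}{\de\lambda_i}(t\lambda)\,\lambda_i$. Evaluating at $t=1$ produces exactly
\[
\sum_{i=1}^n \frac{\de f}{\de\lambda_{i}}(\lambda)\,\lambda_i = f(\lambda),
\]
which is \eqref{homog eqn}. The only points requiring a word of care are that $f$ is differentiable on all of $\Gamma$ (given in Definition \ref{Hesoper}) and that the curve $t\mapsto t\lambda$ stays inside $\Gamma$ (immediate since $\Gamma$ is an open cone with vertex at the origin), so the chain-rule computation is legitimate. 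There is no real obstacle here; the lemma is a one-line consequence of Euler's identity for positively homogeneous functions of degree one, and the homogeneity hypothesis in Definition \ref{Hesoper} is precisely what makes it available.
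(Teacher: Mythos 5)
Your proof is correct and is essentially identical to the one in the paper: both differentiate the homogeneity identity $f(t\lambda)=tf(\lambda)$ in $t$ and evaluate at $t=1$, i.e., Euler's relation for degree-one homogeneous functions. The extra remarks about $t\lambda$ staying in the open cone and the differentiability of $f$ are sound but not needed beyond what the paper tacitly assumes.
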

\begin{proof}
For $t>0$, one has $f(t\la)=tf(\la)$. Differentiating this equation with respect to $t$ and evaluating at $t=1$ yields the claim.
\end{proof}
For Hessian equations, uniform (i.e. $C^0$) estimates follow from the maximum principle. In order to obtain global higher order estimates for the problem (\ref{key1}), a maximum principle type argument is exploited coupled with suitable boundary estimates. When it comes to {\it interior} estimates, especially if no boundary regularity is assumed, the problem becomes very subtle and crucially depends on the algebraic properties of the operator $F$ - see \cite{CW,D,Ch}. For Hessian and higher order bounds, the concavity of the operator becomes very helpful and is quite often incorporated into the standard set of assumptions made on $F$.
\begin{defn}\label{concave}
The elliptic Hessian operator $F: \Sym^2(\R)\longmapsto \mathbb R$ is said to be concave if
$F$ is a concave function on the set of matrices
$$\lbrace A\in \Sym^2(\R)\ |\ \la(A)\in\Gamma\rbrace.$$
\end{defn}
It is well-known that all the examples mentioned after Remark \ref{remark} are concave Hessian operators.

\begin{remark}
Write $A=(a_{ij})$, $f_{i}=\frac{\de f}{\de\lambda_{i}}$, $f_{ij}=\frac{\de^{2}f}{\de\lambda_{i}\de\lambda_{j}}$ and
\[
F^{ij} = \frac{\de F}{\de a_{ij}}, \ F^{ij,kl} = \frac{\de^{2}F}{\de a_{ij}\de a_{kl}}.
\]
If $A$ is diagonal, i.e. $a_{ij}=\delta_{ij}\lambda_{i}$, then (see e.g. \cite{A,G,S})
\begin{equation}\label{derivative of F}
F^{ij} = \delta_{ij}f_{i}, \ \ F^{ij,kl} = f_{ik}\delta_{ij}\delta_{kl}+\frac{f_{i}-f_{j}}{\lambda_{i}-\lambda_{j}}(1-\delta_{ij})\delta_{il}\delta_{jk},
\end{equation}
where the quotient is interpreted as a limit if $\lambda_{i}=\lambda_{j}$. Hence, the concavity of $F$ is equivalent to the concavity of $f$.
\end{remark}

Finally we introduce a technical condition on the operator $F$ which we dub as {\it Condition N}.

\begin{defn}[Condition N]
The elliptic Hessian operator $F$ is said to satisfy condition N if the following holds: whenever $\la\in\Gamma$ is a vector such that
$f(\la)=1$ and $\sum_{i=1}^n\frac{\de f}{\de \la_i}(\la)\leq C$ for some constant $C>0$, then
\[
\frac{\de f}{\de \la_i}(\la)\geq \frac{1}{N_{1}C^{N_{2}}}\ \text{for any $i=1,\cdots,n$}.
\]
for some constants $N_{1}$ and $N_{2}$ depending only on $(\Gamma,F)$.
\end{defn}

Heuristically Condition N allows one to bound the coefficients of the linearized operator associated to $F$ from below once an estimate from above is available. This is a typical scenario for all a priori estimates whenever a maximum principle argument is applied to the linearized operator.

The following proposition shows that a mild additional assumption on the ellipticity of $F$ leads to the condition N.

\begin{proposition}\label{k Hessian p MA condition N}
Assume that $\Gamma$ is a convex invariant cone and $F$ is a concave elliptic Hessian operator on $\Gamma$. If additionally $f$ satisfies
$$ f(\la+\tau)\geq f(\la)+d(\tau_1\cdots\tau_n)^{1/n}, \ \ \la\in\Gamma,\ \tau\in\Gamma_n\ $$
for some constant $d>0$, then the condition N holds for $F$.
\end{proposition}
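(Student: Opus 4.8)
The plan is to pinch the concavity of $f$ against the hypothesized lower bound in order to obtain a pointwise arithmetic--geometric mean type inequality for the gradient of $f$, and then to read off the individual lower bounds on the $f_i$ from the resulting bound on the product $\prod_i f_i$; no compactness or use of the normalization $f(\lambda)=1$ will be needed. Fix $\lambda\in\Gamma$ and let $\tau\in\Gamma_n$ be arbitrary. By the positivity axiom of a (convex) invariant cone, $\lambda+\tau\in\Gamma+\Gamma_n\subset\Gamma$, so every quantity below makes sense. Since $f$ is $C^2$ and concave on the convex set $\Gamma$, it lies below each of its tangent planes, which gives
\[
f(\lambda+\tau)\le f(\lambda)+\sum_{i=1}^n f_i(\lambda)\tau_i .
\]
Subtracting the hypothesis $f(\lambda+\tau)\ge f(\lambda)+d(\tau_1\cdots\tau_n)^{1/n}$ yields
\[
d(\tau_1\cdots\tau_n)^{1/n}\ \le\ \sum_{i=1}^n f_i(\lambda)\tau_i,\qquad \tau\in\Gamma_n .
\]

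The second step is to make the optimal choice of test direction $\tau_i=1/f_i(\lambda)$, which is admissible since ellipticity forces $f_i(\lambda)>0$ for $\lambda\in\Gamma$. Then the right-hand side collapses to $n$ while the left-hand side becomes $d\big(\prod_{i} f_i(\lambda)\big)^{-1/n}$, whence
\[
\prod_{i=1}^n f_i(\lambda)\ \ge\ \Big(\tfrac{d}{n}\Big)^{\!n}.
\]
Now assume, as in condition N, that $\sum_i f_i(\lambda)\le C$. Since each $f_j(\lambda)$ is positive, this forces $f_j(\lambda)\le C$ for every $j$, and therefore, for each fixed $i$,
\[
f_i(\lambda)\ =\ \frac{\prod_j f_j(\lambda)}{\prod_{j\ne i} f_j(\lambda)}\ \ge\ \frac{(d/n)^n}{C^{\,n-1}} .
\]
Thus condition N holds with $N_1=(n/d)^n$ and $N_2=n-1$, constants depending only on $(\Gamma,F)$.

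I do not expect a genuine obstacle here: the entire content is the observation that concavity and the hypothesized growth estimate sandwich together into the gradient AM--GM inequality, after which the passage to condition N is purely elementary. The only two points that require a word of justification are that $\lambda+\tau$ remains in $\Gamma$ (this is precisely the positivity property in Definition \ref{ST}) and that the $f_i(\lambda)$ are strictly positive, so that the substitution $\tau_i=1/f_i(\lambda)$ is legitimate (this is the ellipticity in Definition \ref{Hesoper}). Everything is uniform in $\lambda$, so the argument works verbatim at every admissible point.
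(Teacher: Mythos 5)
Your argument is correct, and it follows the same overall template as the paper's proof — pinch concavity against the growth hypothesis to deduce $\sum_i f_i(\lambda)\tau_i \geq d(\tau_1\cdots\tau_n)^{1/n}$, then specialize $\tau$ — but the choice of test vector is genuinely different. The paper fixes the index $i$ and sets $\tau_i=C$, $\tau_j=f_i(\lambda)$ for $j\neq i$, which directly produces $2C f_i \geq d\,C^{1/n} f_i^{(n-1)/n}$ and hence $f_i\geq 1/\bigl((2/d)^n C^{n-1}\bigr)$. You instead choose the ``self-dual'' vector $\tau_j=1/f_j(\lambda)$, which immediately yields the $C$-free intermediate inequality
\[
\prod_{j=1}^n f_j(\lambda)\ \geq\ \Bigl(\tfrac{d}{n}\Bigr)^{\!n},\qquad \lambda\in\Gamma,
\]
and only then feed in the bound $\sum_j f_j\leq C$ to divide out the $n-1$ remaining factors. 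The two routes give the same exponent $N_2=n-1$ (with $N_1=(2/d)^n$ versus $(n/d)^n$, both harmless). Your version has the small advantage of isolating the clean, $C$-independent product lower bound $\prod_j f_j \geq (d/n)^n$, which is a statement of independent interest and makes the subsequent arithmetic transparent; the paper's choice is more directly engineered toward the single coordinate being estimated. Both proofs correctly observe that neither the normalization $f(\lambda)=1$ nor any compactness is used.
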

\begin{proof}
Fix $\la\in\Gamma$ and let $\tau\in\Gamma_n$ to be chosen later on. Assume that $f(\la)=1$ and $\sum_{i=1}^n\frac{\de f}{\de \la_i}(\la)\leq C$ for some fixed constant $C>0$.
Then
\begin{equation}\label{k Hessian p MA condition N eqn}
\sum_{k=1}^{n}\frac{\de f}{\de \la_k}(\la)\tau_k\geq \int_{0}^{1}\sum_{k=1}^{n}\frac{\de f}{\de \la_k}(\la+s\tau)\,\tau_k \,ds=f(\la+\tau)-f(\la),
\end{equation}
where we used the concavity to justify the inequality. The latter quantity is bounded from below by $d(\tau_1\cdots\tau_n)^{1/n}$. Now fix $i\in\lbrace1,\cdots,n\rbrace$ and define
$\tau_i:=C$, $\tau_j:=\frac{\de f}{\de \la_i}(\la)$ for $j\neq i$. Then \eqref{k Hessian p MA condition N eqn} becomes
\[
2C\frac{\de f}{\de \la_i}(\la)
\geq \frac{\de f}{\de \la_i}(\la)\la_i+\sum_{k\neq i}\frac{\de f}{\de \la_k}(\la)\la_k
= \sum_{k=1}^n\frac{\de f}{\de \la_k}(\la)\la_k\geq dC^{\frac{1}{n}}\left(\frac{\de f}{\de \la_i}(\la)\right)^{\frac{n-1}{n}}.
\]
Reordering the terms, we obtain
$$\frac{\de f}{\de \la_i}(\la)\geq\frac1{(2/d)^nC^{n-1}},$$
which finishes the proof.
\end{proof}

\begin{remark}\label{condition N remark}
The domination of the product of $\la_i$ is a much weaker condition than strict ellipticity. In a very recent paper \cite{HL2}, Harvey and Lawson have shown that for any pair $(\la(G), F)$ with $F$ being an invariant G\aa rding-Dirichlet polynomial and $G$ being the component of $\Sym^2(\R)\setminus\lbrace F=0\rbrace$ containing the identity matrix, we have $f(\la+\tau)\geq f(\la)+d(\tau_1\cdots\tau_n)^{1/n}$, $\la\in\Gamma$, $\tau\in\Gamma_n$  and hence $F$ satisfies condition N. In particular, all the explicit examples above, with the notable exception of the Hessian quotient operator, satisfy condition N - see \cite[Section 3]{HL2}.
\end{remark}

\section{Condition D}\label{condition D}

Below we define the Condition D that is advertised in Section \ref{introduction}.
\begin{defn}[Condition D]\label{conD}
Assume that $\Gamma$ is a convex invariant cone and $F$ is a concave elliptic Hessian operator on $\Gamma$. An admissible function $u$ is said to satisfy condition D if the following holds: $F(D^2u)=1$ and for any point $x$ and index $i\in\lbrace1,\cdots,n\rbrace$, if the eigenvalue $\la_i=\la_i(D^2u(x))$ is larger than $D_{1}$, then
$$\frac{\de f}{\de \la_i}(\la)\la_i\leq D_{2}$$
for some constants $D_{1}$ and $D_{2}$ depending only on $(\Gamma,F)$.
\end{defn}

Heuristically condition D says that whenever any eigenvalue is large, the corresponding coefficient of the linearized operator is small enough to tame the size of the product. Keeping Lemma \ref{homog} in mind, condition D states that the summands in \eqref{homog eqn} corresponding to large eigenvalues do not deviate too much from the average.

The condition D is of analytic nature and clearly depends on the operator $F$ through the function $f$. Below we state a purely geometric condition, inspired by \cite{CNS}.
	
\begin{defn}[Condition CNS]\label{conCNS}
Let $\Gamma$ be a convex invariant cone. An admissible function $u$ is said to satisfy condition CNS if the following holds: there is a uniform constant $R>0$ such that for any $x\in\Om$ and $i\in\lbrace1,\cdots,n\rbrace$ the vector
$$\left(\la_1(D^2u(x)),\cdots,\la_{i-1}(D^2u(x)),R,\la_{i+1}(D^2u(x)),\cdots,\la_n(D^2u(x))\right)$$
belongs to $\Gamma$.	
\end{defn}

\begin{remark}
It is clear that all convex $C^2$ smooth functions satisfy condition CNS.
\end{remark}

The following proposition links the above two notions for concave Hessian operators:
\begin{proposition}\label{CNSimplyD}
Assume that $\Gamma$ is a convex invariant cone and $F$ is a concave elliptic Hessian operator on $\Gamma$. If an admissible function $u$ satisfies $F(D^{2}u)=1$ and condition CNS, then $u$ satisfies condition $D$ with $D_{1}=2R$ and $D_{2}=2$.	
\end{proposition}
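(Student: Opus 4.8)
The plan is to estimate, for a fixed point $x$ and a fixed index $i$ with $\la_i = \la_i(D^2u(x)) > 2R$, the quantity $\frac{\de f}{\de \la_i}(\la)\la_i$ directly from the concavity of $f$ together with the homogeneity relation \eqref{homog eqn}. Write $\la = (\la_1,\dots,\la_n)$ for the eigenvalue vector at $x$ and let $\mu$ denote the vector obtained from $\la$ by replacing the $i$-th entry by $R$, i.e. $\mu = (\la_1,\dots,\la_{i-1},R,\la_{i+1},\dots,\la_n)$. By condition CNS we have $\mu \in \Gamma$, and since $\la_i > 2R > R$, the vector $\mu$ lies on the segment from $\la$ towards the hyperplane $\{\la_i = R\}$; more precisely $\mu = \la + (R - \la_i)e_i$ with $R - \la_i < 0$.

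First I would invoke concavity of $f$ in the form of the gradient inequality: since $f$ is concave on $\Gamma$ and both $\la, \mu \in \Gamma$,
\[
f(\mu) \leq f(\la) + \sum_{k=1}^n \frac{\de f}{\de \la_k}(\la)(\mu_k - \la_k) = f(\la) + \frac{\de f}{\de \la_k}(\la)\Big|_{k=i}(R - \la_i).
\]
Rearranging, and using $f(\la) = 1$ together with positivity of $f$ on $\Gamma$ (Definiteness), gives
\[
\frac{\de f}{\de \la_i}(\la)(\la_i - R) \leq f(\la) - f(\mu) \leq f(\la) = 1.
\]
Hence $\frac{\de f}{\de \la_i}(\la) \leq \frac{1}{\la_i - R}$. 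Since $\la_i > 2R$ we have $\la_i - R > \la_i/2$, so $\frac{\de f}{\de \la_i}(\la) \leq \frac{2}{\la_i}$, and therefore
\[
\frac{\de f}{\de \la_i}(\la)\la_i \leq 2.
\]
This is exactly condition D with $D_1 = 2R$ and $D_2 = 2$.

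The only point requiring a little care — and the natural place for a subtlety — is the very first step, namely that $\mu \in \Gamma$ and hence $f(\mu)$ is defined and nonnegative: this is precisely what condition CNS delivers (with its uniform constant $R$), and the nonnegativity $f(\mu) \geq 0$ is the Definiteness clause of Definition \ref{Hesoper}, which is what lets us drop the $-f(\mu)$ term. One should also note that the gradient inequality for concave functions is applied to $f$ on the convex set $\Gamma$, and here $\Gamma$ is convex by hypothesis, so the segment $[\mu,\la] \subset \Gamma$ and the inequality is legitimate. I do not expect any genuine obstacle; the argument is a one-line consequence of concavity once condition CNS is used to produce the comparison vector $\mu$, and the factor $2$ in $D_1, D_2$ comes solely from the crude bound $\la_i - R > \la_i/2$.
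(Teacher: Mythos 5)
Your proof is correct and takes essentially the same route as the paper: both arguments fix the point, use condition CNS to produce the comparison vector $\mu=(\la_1,\dots,R,\dots,\la_n)\in\Gamma$, apply concavity of $f$ along the $i$-th coordinate line to get $\frac{\de f}{\de \la_i}(\la)(\la_i-R)\le f(\la)-f(\mu)\le 1$ (dropping $f(\mu)>0$ by Definiteness), and then use $\la_i>2R\Rightarrow\la_i-R>\la_i/2$. The only cosmetic difference is that you invoke the tangent-line (subgradient) inequality directly, whereas the paper writes the same one-dimensional concavity estimate via the fundamental theorem of calculus together with monotonicity of $\de f/\de\la_i$; also note Definiteness gives strict positivity $f(\mu)>0$, not merely $f(\mu)\ge 0$, though this does not affect the argument.
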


\begin{proof}
Fix an admissible function $u$ and a point $x$ in its domain of definition. Suppose that $u$ satisfies condition CNS with parameter $R$. Let the eigenvalues of $D^2u(x)$ be such that $f(\la)=1$, while $\la_n>2R$. Then $(\lambda',R)\in\Gamma$ for $\lambda'=(\lambda_{1},\cdots,\lambda_{n-1})$ and we compute
\[
f(\la) = f(\la',R)+\int_{R}^{\la_n}\frac{\de f}{\de \la_n}(\lambda',s)ds\geq \frac{\de f}{\de \la_n}(\la)(\la_n-R),
\]
by concavity. Hence
$$1\geq \frac12\frac{\de f}{\de \la_n}(\la)\la_n,$$
which yields the claim.
\end{proof}

Let us now compare condition D with other convexity assumptions made in the literature. As we have already mentioned, if an admissible function is {\it convex} then is satisfies condition CNS and hence condition D (this is also obvious from Lemma \ref{homog}). The main point however is that many weaker convexity conditions imply condition D.

For the $p$-Monge-Amp\`ere equation, the following proposition is trivial.
\begin{proposition}\label{p-1}
Let $(\Gamma,F)$ be the $p$-Monge-Amp\`ere equation on $\hat{\Gamma}_p$. The admissible functions then are characterized by the property that the sum of any $p$ distinct eigenvalues of the Hessian at any point $x_0$ is positive (the $p$-plurisubharmonicity). Then any $p$-plurisubharmonic function which is $(p-1)$ plurisubharmonic satisfies the condition CNS. In fact, it suffices to assume that there is $R$ such that $u(x)+\frac{R}{2}|x|^2$ is $(p-1)$-plurisubharmonic.
\end{proposition}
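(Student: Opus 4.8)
The plan is to produce an explicit constant $R'>0$ realizing condition CNS (Definition \ref{conCNS}); the choice $R':=(p-1)R+1$ will work. Recall that for the $p$-Monge-Amp\`ere operator the cone is $\hat{\Gamma}_p$, so admissibility of $u$ means that at every point $x$ the eigenvalues $\la=\la(D^2u(x))$ satisfy $\la_{j_1}+\cdots+\la_{j_p}>0$ for every $p$-tuple $1\le j_1<\cdots<j_p\le n$; and the hypothesis that $u(x)+\frac{R}{2}|x|^2$ is $(p-1)$-plurisubharmonic says, after passing to Hessians (adding $\frac{R}{2}|x|^2$ adds $R$ to every eigenvalue of $D^2u$), that
\[
\la_{j_1}+\cdots+\la_{j_{p-1}}>-(p-1)R
\]
for every $(p-1)$-tuple and every $x$. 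We may assume $R\ge 0$, since enlarging $R$ only weakens this hypothesis.

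First I would fix an arbitrary point $x$, an arbitrary index $i\in\{1,\dots,n\}$, and an arbitrary $p$-tuple $S=\{j_1<\cdots<j_p\}$, and verify that the sum of the entries of the modified vector $(\la_1,\dots,\la_{i-1},R',\la_{i+1},\dots,\la_n)$ indexed by $S$ is strictly positive. If $i\notin S$ this sum is exactly $\sum_{j\in S}\la_j$, which is positive by the $p$-plurisubharmonicity of $u$. If $i\in S$ the sum equals $R'+\sum_{j\in S\setminus\{i\}}\la_j$; since $S\setminus\{i\}$ is a $(p-1)$-tuple, the displayed inequality gives $\sum_{j\in S\setminus\{i\}}\la_j>-(p-1)R$, whence the sum exceeds $R'-(p-1)R=1>0$. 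Since $R'=(p-1)R+1>0$ and all these inequalities are strict, the modified vector lies in the open cone $\hat{\Gamma}_p$ for every $x$ and every $i$, which is exactly condition CNS. Taking $R=0$ (so $R'=1$) recovers the first assertion.

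There is essentially no obstacle: the argument is pure bookkeeping of whether the distinguished index $i$ lies in the chosen $p$-tuple $S$, and the only structural inputs are the defining inequalities of $\hat{\Gamma}_p$ and $\hat{\Gamma}_{p-1}$ together with the eigenvalue-shift remark. The one fine point worth recording in the write-up is that condition CNS demands $R'>0$, which forces the $+1$ in the choice of $R'$ (so one cannot merely take $R'=(p-1)R$, and in the case $R=0$ one really does need a positive radius), and that all inequalities produced are strict, so the modified vector genuinely belongs to the open cone rather than to its closure.
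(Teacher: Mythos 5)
Your proof is correct and is the natural direct verification; the paper offers no written proof at all, simply stating the proposition is trivial. The case split on whether the distinguished index $i$ lies in the chosen $p$-tuple $S$, together with the explicit choice $R'=(p-1)R+1$ (after the harmless normalization $R\ge0$), is exactly the bookkeeping one needs, and correctly handles the requirement in Definition \ref{conCNS} that the constant be strictly positive.
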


For the $k$-Hessian equation, we have the following proposition.

\begin{proposition}\label{khess}
Let $(\Gamma,F)$ be the $k$-Hessian equation on ${\Gamma}_k$. The admissible functions then are characterized by the property that $\sigma_j(D^2u(x))\geq 0$ for $j=1,\cdots,k$ (the $k$-convexity). Then any $k$-convex function which is $(k+1)$ convex satisfies the condition CNS. In fact, it suffices to assume that there is $A$ such that $u$ is $k$-convex and $\sigma_{k+1}(D^2u(x))\geq -A\sigma_k(D^2u(x))$.
\end{proposition}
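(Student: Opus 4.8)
The plan is to verify condition CNS directly (which, by Proposition~\ref{CNSimplyD}, also yields condition D), working pointwise with eigenvalues. Fix $x\in\Om$ and write $\la=(\la_1,\dots,\la_n):=\la(D^2u(x))\in\Gamma_k$; fix $i\in\{1,\dots,n\}$ and let $\hat\la_i\in\mathbb R^{n-1}$ denote $\la$ with its $i$-th entry deleted. The main tool will be the affine expansion
\[
\sigma_j(\la_1,\dots,\la_{i-1},t,\la_{i+1},\dots,\la_n)=\sigma_j(\hat\la_i)+t\,\sigma_{j-1}(\hat\la_i),
\]
together with the standard fact (see, e.g., \cite{CNS}) that $\la\in\Gamma_k$ forces $\hat\la_i\in\Gamma_{k-1}$ in $\mathbb R^{n-1}$, whence $\sigma_l(\hat\la_i)>0$ for $0\le l\le k-1$. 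From the expansion, replacing $\la_i$ by any $R>0$ keeps $\sigma_j>0$ for all $j\le k-1$, so only $\sigma_k$ needs to be arranged. Since $g(t):=\sigma_k(\la_1,\dots,\la_{i-1},t,\la_{i+1},\dots,\la_n)=\sigma_k(\hat\la_i)+t\,\sigma_{k-1}(\hat\la_i)$ is affine with positive slope $\sigma_{k-1}(\hat\la_i)$ and $g(\la_i)=\sigma_k(D^2u(x))>0$, one has $g>0$ precisely on $(t_0,\infty)$ with $t_0:=-\sigma_k(\hat\la_i)/\sigma_{k-1}(\hat\la_i)<\la_i$. Thus condition CNS will follow once $t_0=t_0(x,i)$ is bounded above by a uniform constant.

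If $t_0\le0$ there is nothing to prove, so suppose $t_0>0$ and set $\mu:=(\la_1,\dots,\la_{i-1},t_0,\la_{i+1},\dots,\la_n)\in\mathbb R^n$. Then $\sigma_k(\mu)=g(t_0)=0$, while the expansion (with $t_0>0$ and $\hat\la_i\in\Gamma_{k-1}$) gives $\sigma_j(\mu)>0$ for $j\le k-1$, in particular $\sigma_{k-1}(\mu)>0$. The key observation is that Newton's inequality for the elementary symmetric functions of an arbitrary real vector gives $\sigma_{k-1}(\mu)\,\sigma_{k+1}(\mu)\le c_{n,k}\,\sigma_k(\mu)^2=0$ for a dimensional constant $c_{n,k}>0$, so $\sigma_{k+1}(\mu)\le0$.

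Finally, I would compare along the slice through $x$ in the $i$-th eigenvalue. The map $t\mapsto\sigma_{k+1}(\la_1,\dots,\la_{i-1},t,\la_{i+1},\dots,\la_n)=\sigma_{k+1}(\hat\la_i)+t\,\sigma_k(\hat\la_i)$ is affine, so evaluating it, and $g$, at $t=\la_i$ and at $t=t_0$ yields
\[
\sigma_{k+1}(D^2u(x))-\sigma_{k+1}(\mu)=(\la_i-t_0)\,\sigma_k(\hat\la_i),
\qquad
\sigma_k(D^2u(x))=(\la_i-t_0)\,\sigma_{k-1}(\hat\la_i).
\]
Invoking the hypothesis $\sigma_{k+1}(D^2u(x))\ge-A\,\sigma_k(D^2u(x))$, the bound $\sigma_{k+1}(\mu)\le0$ from the previous step, and $\la_i-t_0>0$, the first identity gives $(\la_i-t_0)\,\sigma_k(\hat\la_i)\ge-A(\la_i-t_0)\,\sigma_{k-1}(\hat\la_i)$; dividing by $(\la_i-t_0)\,\sigma_{k-1}(\hat\la_i)>0$ produces $\sigma_k(\hat\la_i)/\sigma_{k-1}(\hat\la_i)\ge-A$, i.e. $t_0\le A$. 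Hence $R:=\max\{A,0\}+1$ works uniformly in $x$ and $i$, establishing condition CNS; the first assertion of the proposition is the case $A=0$, where $R=1$ suffices. The only non-routine step is the middle one — using Newton's inequality to see that on the hyperplane slice where $\sigma_k$ is forced to vanish the next symmetric function $\sigma_{k+1}$ must be non-positive — and this is where I expect the main difficulty to lie: it is exactly the mechanism that converts the one-sided lower bound on $\sigma_{k+1}(D^2u)$ into the upper bound on $t_0$.
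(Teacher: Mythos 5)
Your argument is correct and runs along essentially the same lines as the paper's proof: both reduce the problem to the inequality $\sigma_k(\hat\la_i)/\sigma_{k-1}(\hat\la_i)\geq -A$ via the affine expansion of $\sigma_j$ in one eigenvalue, the inclusion $\hat\la_i\in\Gamma_{k-1}$, Newton's inequality at the triple $(k-1,k,k+1)$, and the hypothesis $\sigma_{k+1}(\la)\geq -A\,\sigma_k(\la)$. The only deviation is one of packaging: the paper applies Newton to $\hat\la_i\in\mathbb{R}^{n-1}$ directly and then manipulates via the chain \eqref{kkk}, whereas you invoke Newton for the auxiliary point $\mu\in\mathbb{R}^{n}$ where $\sigma_k$ is made to vanish, which is an equivalent but slightly more geometric way of arriving at the same threshold bound $t_0\leq A$.
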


\begin{proof}
Fix $\la=(\lambda',\lambda_{n})\in\Gamma_k$. It suffices to show that $\sigma_{k+1}(\la)>-A$ implies that $(\la',R)\in\Gamma_k$ for $R$ depending only on $A$.

Denote by $\sigma_j(\la|i)=\frac{\de \sigma_j}{\de \la_i}(\la)$. Note that this quantity does not depend on $\la_i$. It is well-known that
\begin{equation}\label{sik}
\sigma_j(\la)=\sigma_{j-1}(\la|i)\la_i+\sigma_j(\la|i), \ \ j=1,\cdots,n.
\end{equation}
and
\begin{equation}\label{elliptic sigma k}
 \sigma_j(\la|i) > 0, \ \ j=1,\cdots,k-1.
\end{equation}
Using \eqref{sik}, \eqref{elliptic sigma k} and an induction argument, we see that $(\lambda',R)\in\Gamma_{k-1}$. To prove $(\la',R)\in\Gamma_k$, it then suffices to show $\sigma_{k}(\la',R)>0$.

Recall the classical Newton inequality:
\begin{equation}\label{Newton}
\left[\frac{\sigma_j(\mu)}{\binom n j}\right]^2\geq 	\frac{\sigma_{j-1}(\mu)}{\binom n {j-1}}\cdot\frac{\sigma_{j+1}(\mu)}{\binom n {j+1}},\ \ \mu\in\R, \ j=1,\cdots,n-1.
\end{equation}
It is worth emphasizing that \eqref{Newton} holds for {\it any} vector $\mu\in\R$ and not only for those in $\Gamma_{j+1}$. It is clear that
\begin{equation}\label{weakNewton}
[{\sigma_j(\mu)}]^2
\geq  \frac{{\binom n {j-1}}{\binom n {j+1}}}{{\binom n j}^{2}}[{\sigma_j(\mu)}]^2
\geq {\sigma_{j-1}(\mu)}{\sigma_{j+1}(\mu)}.
\end{equation}
Applying \eqref{weakNewton} to $\lambda'=(\lambda_{1},\cdots,\lambda_{n-1})$, we obtain
$$[{\sigma_{k}(\la')}]^2\geq {\sigma_{k-1}(\la')}{\sigma_{k+1}(\la')},$$
which can be rewritten as
$$[{\sigma_{k}((\la',R)|n)}]^2\geq {\sigma_{k-1}((\la',R)|n)}\sigma_{k+1}((\la',R)|n).$$
Combining this with (\ref{sik}) and $\sigma_{j}(\lambda|n)=\sigma_{j}((\la',R)|n)$,
\begin{equation}\label{kkk}
\begin{split}
& \sigma_{k}((\la',R)|n)\sigma_k(\la) \\
= {} & \sigma_{k}((\la',R)|n)\left(\sigma_k(\la|n)+\lambda_{n}\sigma_{k-1}(\la|n)\right) \\
= {} & \sigma_{k}((\la',R)|n)\left(\sigma_k((\la',R)|n)+\lambda_{n}\sigma_{k-1}((\la',R)|n)\right) \\
\geq {} & {\sigma_{k-1}((\la',R)|n)}\sigma_{k+1}((\la',R)|n)+\lambda_{n}\sigma_{k-1}((\la',R)|n)\sigma_{k}((\la',R)|n) \\
= {} & {\sigma_{k-1}((\la',R)|n)}\left(\sigma_{k+1}(\la|n)+\lambda_{n}\sigma_{k}(\la|n)\right) \\
= {} & {\sigma_{k-1}((\la',R)|n)}\sigma_{k+1}(\lambda).
\end{split}
\end{equation}
From $\la\in\Gamma_k$, we obtain $\sigma_k(\la)>0$ and
$$\sigma_{k-1}((\la',R)|n) = \sigma_{k-1}(\la|n) = \frac{\de \sigma_k}{\de \la_n}(\la)>0.$$
Then (\ref{kkk}) becomes
\begin{equation}\label{kk}
\frac{\sigma_{k}((\la',R)|n)}{\sigma_{k-1}((\la',R)|n)}\geq\frac{\sigma_{k+1}(\la)}{\sigma_k(\la)}>-A.
\end{equation}
Finally picking any $R>A$ and once again exploiting (\ref{sik}) one obtains
$$\sigma_k(\la',R)=\sigma_{k}((\la',R)|n)+R\sigma_{k-1}((\la',R)|n)>0,$$
which finishes the proof.
\end{proof}

\section{Main estimate}\label{main estimate}
In this section, we give the proof of Theorem \ref{intgrad}. Without loss of generality, we assume that $0\in\Omega$ for convenience.

\subsection{$C^{0}$ estimate}

\begin{proposition}\label{C0 estimate}
Under the same assumptions of Theorem \ref{intgrad}, there exists a constant $C$ depending only on $F$ and $\diam(\Omega)$ such that
\[
-C \leq u < 0 \ \text{in $\Omega$}.
\]
\end{proposition}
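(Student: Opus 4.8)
The plan is to prove the two bounds separately: the upper bound $u<0$ is immediate from subharmonicity of admissible functions, while for the lower bound I would run a classical barrier argument, comparing $u$ with an explicit quadratic function via the comparison principle for the elliptic operator $F$.

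For the upper bound, I would use that a convex invariant cone satisfies $\Gamma\subset\Gamma_1$, so any admissible function obeys $\Delta u=\sigma_1(\la(D^2u))>0$ and is therefore strictly subharmonic. Since $u=0$ on $\de\Omega$, the maximum principle yields $u\leq 0$ on $\ov{\Omega}$; and $u$ cannot equal $0$ at an interior point $x_0$, since there $D^2u(x_0)\leq 0$ would contradict $\Delta u(x_0)>0$. Hence $u<0$ in $\Omega$.

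For the lower bound, put $d:=\diam(\Omega)$; since $0\in\Omega$ we have $\Omega\subset B_d(0)$. The identity matrix has eigenvalue vector $(1,\dots,1)\in\Gamma_n\subset\Gamma$ (the inclusion $\Gamma_n\subset\Gamma$ following from invariance, convexity and positivity), so $F(I)>0$ by definiteness. I would then take $A:=2/F(I)$ and set the barrier
\[
w(x):=\frac{A}{2}\big(|x|^2-d^2\big).
\]
Then $D^2w\equiv AI$ is admissible, $F(D^2w)=AF(I)=2$ by homogeneity, and $w\leq 0=u$ on $\de\Omega$ because $|x|\leq d$ there. The claim is that $w\leq u$ in $\Omega$: if not, $w-u$ attains a positive maximum at some interior $x_0$, where $D^2w(x_0)\leq D^2u(x_0)$; as both matrices are admissible, monotonicity of $F$ on the admissible cone (a consequence of the ellipticity $f_i>0$; cf. Remark \ref{remark}) forces $2=F(D^2w(x_0))\leq F(D^2u(x_0))=1$, which is absurd. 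Consequently $u(x)\geq w(x)\geq w(0)=-d^2/F(I)$ for every $x\in\Omega$, so one may take $C:=\diam(\Omega)^2/F(I)$, which depends only on $F$ and $\diam(\Omega)$.

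I expect no serious obstacle here — this is the easy estimate — but two standard points need a little care: the comparison principle in the smooth setting, which reduces to monotonicity of $F$ with respect to the matrix ordering on comparable admissible matrices (and, if one insists on only a weak inequality from Remark \ref{remark}, a trivial $\varepsilon I$ perturbation followed by a limit), and the verification that the quadratic barrier is itself admissible, which is automatic from $\Gamma_n\subset\Gamma$. The genuine difficulty of Theorem \ref{intgrad} lies further on, in the gradient and Hessian estimates where conditions N and D enter.
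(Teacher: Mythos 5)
Your argument is correct and is essentially the paper's own proof: the upper bound $u<0$ comes from $\Gamma\subset\Gamma_1$, subharmonicity and the (strong) maximum principle, and the lower bound from the quadratic barrier $\tfrac{A}{2}\bigl(|x|^2-\diam^2(\Omega)\bigr)$ together with the comparison principle. The only cosmetic difference is that the paper normalizes $A=1/f(\mathbf 1)$ so that $F(D^2\underline u)=1=F(D^2u)$ and cites the comparison principle outright, whereas you take $A=2/F(I)$ so the contradiction at an interior maximum of $w-u$ is immediate from the monotonicity of $F$ (with the $\varepsilon I$ perturbation you rightly mention to pass from Remark~\ref{remark} to the weak inequality).
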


\begin{proof}
From $\Gamma\subset\Gamma_{1}$, we see that $\Delta u>0$. Then the strong maximum principle and $u=0$ on $\de\Omega$ show
\[
u < 0 \ \text{in $\Omega$}.
\]
For the lower bound of $u$, we define
\[
\underline{u} = \frac{A}{2}\left(|x|^{2}-\diam^{2}(\Omega)\right).
\]
Writing $\mathbf{1}=(1,\cdots,1)$ and choosing $A=1/f(\mathbf{1})$, we have
\[
\underline{u} \leq 0 \ \text{and} \ F(D^{2}\underline{u}) = f(A\mathbf{1}) = Af(\mathbf{1}) = 1
\]
and so
\[
\begin{cases}
\ F(D^{2}\underline{u}) = F(D^{2}u) & \mbox {in $\Omega$}, \\
\ \underline{u} \leq u & \mbox{on $\de\Omega$}.
\end{cases}
\]
By the comparison principle, we obtain $\underline{u}\leq u$ in $\Omega$.
\end{proof}

\subsection{$C^{1}$ estimate}

\begin{proposition}\label{C1 estimate}
Under the same assumptions of Theorem \ref{intgrad}, there exist constants $\alpha$ and $C$ depending only on $\|u\|_{C^{0}(\Omega)}$, $N_{1}$, $N_{2}$, $D_{1}$, $D_{2}$ and $\diam(\Omega)$ such that
\[
\sup_{\Omega}\Big((-u)^{\alpha}|Du|\Big) \leq C.
\]
\end{proposition}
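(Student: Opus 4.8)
The plan is a maximum-principle argument. Fix a large exponent $\alpha$ (to be determined by the data) and a small constant $A>0$, and consider the test function
$$Q=2\alpha\log(-u)+\log|Du|^{2}+\tfrac{A}{2}|x|^{2}$$
on $\Omega$; here I use Proposition \ref{C0 estimate}, so $0<-u\le C_{0}:=\|u\|_{C^{0}(\Omega)}$, while $\tfrac{A}{2}|x|^{2}\le\tfrac{A}{2}\diam(\Omega)^{2}$ is harmless and its role will appear below. Since $2\alpha\log(-u)\to-\infty$ on $\partial\Omega$ while the remaining terms stay bounded there --- working, if boundary regularity is not available, on the sublevel sets $\{u<-\varepsilon\}$ and letting $\varepsilon\downarrow0$ at the end --- the supremum of $Q$ is attained at an interior point $x_{0}$, with $Du(x_{0})\ne0$. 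Showing $\sup_{\Omega}Q\le C$ gives $(-u)^{2\alpha}|Du|^{2}\le e^{C}$, which is the assertion.

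The first step is the first-order condition $DQ(x_{0})=0$. Diagonalising $D^{2}u(x_{0})$ with eigenvalues $\lambda_{i}$ and writing $\mu:=\frac{\alpha|Du(x_{0})|^{2}}{-u(x_{0})}$, it reads $u_{i}(\lambda_{i}-\mu)=-\tfrac{A}{2}|Du(x_{0})|^{2}x_{i}$, so $Du(x_{0})$ is an eigenvector of $D^{2}u(x_{0})$ with eigenvalue $\mu$ up to an $O(A)$-correction. Since $|Du(x_{0})|^{2}=\tfrac{(-u(x_{0}))\mu}{\alpha}\le\tfrac{C_{0}\mu}{\alpha}$, it suffices to bound $\mu$; and if $\mu\le D_{1}$ we are already done, so we may assume $\mu>D_{1}$, which is exactly the regime in which Condition D applies to this eigenvalue, yielding $f_{\mu}\mu\le D_{2}$ for the corresponding linearised coefficient $f_{\mu}$.

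The second step is the inequality $\sum_{ij}F^{ij}\partial_{ij}Q(x_{0})\le0$. Differentiating $F(D^{2}u)=1$ once gives $\sum_{i}f_{i}u_{iik}=0$ for every $k$, which (after diagonalisation) removes every third-order derivative occurring in $\partial_{ij}|Du|^{2}$; using in addition the homogeneity identity $\sum_{i}f_{i}\lambda_{i}=1$ of Lemma \ref{homog}, the first-order relations, and $\sum_{ij}F^{ij}\partial_{ij}(\tfrac{A}{2}|x|^{2})=A\sum_{i}f_{i}$, the inequality becomes, schematically,
$$A\sum_{i}f_{i}+\frac{\sum_{i}f_{i}\lambda_{i}^{2}}{|Du|^{2}}\ \le\ C_{\alpha}\Bigl(\frac{1}{-u}+\frac{\sum_{i}f_{i}u_{i}^{2}}{(-u)^{2}}+\frac{\sum_{i}f_{i}u_{i}^{2}\lambda_{i}^{2}}{|Du|^{4}}\Bigr),$$
with $C_{\alpha}$ depending only on $\alpha$. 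Granting for the moment that the two ``cross terms'' on the right can be absorbed (see below), one reads off $\sum_{i}f_{i}\lesssim\mu/(-u)$ and $\sum_{i}f_{i}\lambda_{i}^{2}\lesssim\mu$. Now Condition N gives $f_{\mu}\ge\bigl(N_{1}(\sum_{j}f_{j})^{N_{2}}\bigr)^{-1}$, and together with $f_{\mu}\mu\le D_{2}$ this forces $\mu\le D_{2}N_{1}(\sum_{j}f_{j})^{N_{2}}$; feeding back the bound on $\sum_{j}f_{j}$ just obtained (and $-u\le C_{0}$) closes a self-improving inequality for $\mu$, hence $\mu\le C$ and the estimate follows.

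The heart --- and the main obstacle --- is the absorption of the cross terms, which is why the $\tfrac{A}{2}|x|^{2}$ term and the freedom in $\alpha$ were built in. One splits the indices: if $\lambda_{i}$ is close to $\mu$ then $\lambda_{i}>D_{1}$, so by Condition D $f_{i}\le D_{2}/\lambda_{i}$ is small and $\sum_{\lambda_{i}\approx\mu}f_{i}u_{i}^{2}\le|Du|^{2}\sum_{\lambda_{i}\approx\mu}f_{i}$ is controlled; if $\lambda_{i}$ is far from $\mu$, the first-order relation gives $u_{i}^{2}=\tfrac{A^{2}|Du|^{4}x_{i}^{2}}{4(\lambda_{i}-\mu)^{2}}$, which is bounded \emph{independently of $|Du|$} because $|Du|^{4}/\mu^{2}=(-u)^{2}/\alpha^{2}\le C_{0}^{2}/\alpha^{2}$. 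Choosing $\alpha\gg A\diam(\Omega)^{2}$ lets the residual $\tfrac{c}{\alpha}\sum_{j}f_{j}$ be moved back to the left-hand side. The genuinely delicate contributions are those of the possibly \emph{negative} transverse eigenvalues inside $\sum_{i}f_{i}u_{i}^{2}\lambda_{i}^{2}$: these must be tamed using the inclusion $\Gamma\subset\Gamma_{1}$ (so $\sum_{j}\lambda_{j}>0$, which together with Conditions D and N bounds each $|\lambda_{i}|$ by a power of $\sum_{j}f_{j}$) and a careful balancing against the large denominator $|Du|^{4}$. I expect this bookkeeping, rather than any single conceptual step, to be the crux.
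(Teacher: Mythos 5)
Your overall plan matches the paper's: a logarithmic test function $\log|Du|^2+\alpha\log(-u)+\tfrac{A}{2}|x|^2$ (the paper uses exactly this with an extra harmless $u^2/2$), the first-order condition to identify those eigenvalues that must be comparable to $\mu=\alpha|Du|^2/(-u)$, Condition D to control the corresponding linearized coefficients, and Condition N to close. Two points in your sketch, however, are genuine gaps rather than mere bookkeeping.

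First, the closing step ``\dots closes a self-improving inequality for $\mu$, hence $\mu\le C$'' is not what the argument delivers, and it is not what one needs. Once you have $\sum_i f_i u_i^2+\sum_i f_i\le C/(-u)$ at $x_0$, Condition N gives only $f_i\ge N_1^{-1}\bigl((-u)/C\bigr)^{N_2}$, whence $|Du(x_0)|^2\le N_1\bigl(C/(-u)\bigr)^{N_2}\sum_i f_i u_i^2\le C(-u)^{-(N_2+1)}$; this is a polynomial blow-up in $(-u)^{-1}$, so $\mu$ is \emph{not} uniformly bounded. The actual role of the free exponent $\alpha$ --- which your sketch does not identify --- is precisely to compensate this blow-up: the bound on $Q(x_0)$ closes only after one also chooses $2\alpha\ge N_2+1$, so that $(-u(x_0))^{2\alpha}|Du(x_0)|^2\le(-u(x_0))^{2\alpha-N_2-1}\cdot C$ stays bounded. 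Asserting $\mu\le C$ is strictly stronger than the proposition and is false in general; what one proves is $Q(x_0)\le C$ for suitably large $\alpha$, and you should state the closing argument in those terms.

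Second, the object $f_\mu$ does not make sense and it obscures the real mechanism: $\mu$ is not in general an eigenvalue of $D^2u(x_0)$, so there is no ``corresponding linearized coefficient.'' What the first-order condition gives is that for every index $i$ with $u_i^2$ comparable to $|Du|^2$ (equivalently, when $|Du|$ is large) the actual eigenvalue $\lambda_i$ is comparable to $\mu$, and it is to \emph{these} $\lambda_i$ that Condition D applies. The paper's Lemma on the cross term makes this precise by splitting $I=\{i:3u_i^2\le|Du|^2\}$: for $i\in I$ the term $3F^{ii}u_i^2\lambda_i^2/|Du|^4\le F^{ii}\lambda_i^2/|Du|^2$ is absorbed trivially (regardless of the sign of $\lambda_i$); for $i\notin I$ the first-order condition forces $\lambda_i$ to be large \emph{positive}, so Condition D gives $F^{ii}\lambda_i\le D_2$ and the term is $\le C\alpha/(-u)$. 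Your worry about ``negative transverse eigenvalues'' in $\sum_i f_iu_i^2\lambda_i^2$ is therefore misdirected: the only indices where the first-order condition fails to force $\lambda_i>0$ are exactly those with $u_i$ small, and there the term is harmless. Once you repair the $f_\mu$ confusion and the closing inequality, the rest of your sketch aligns with the paper.
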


\begin{proof}
We consider the quantity
\[
Q = \frac{1}{2}\log|Du|^{2}+\alpha\log(-u)+\frac{u^{2}}{2}+\frac{|x|^{2}}{2},
\]
where $\alpha$ is a positive constant to be determined later. Let $x_{0}$ be the maximum point of $Q$. We choose the coordinate system such that
\[
u_{ij}(x_{0}) = \delta_{ij}\lambda_{i}.
\]

\begin{lemma}
At $x_{0}$, we have
\begin{equation}\label{Q i}
0 = \frac{u_{i}\lambda_{i}}{|Du|^{2}}+\frac{\alpha u_{i}}{u}+uu_{i}+x_{i}
\end{equation}
and
\begin{equation}\label{Q ii}
0 \geq \frac{F^{ii}\lambda_{i}^{2}}{|Du|^{2}}-\frac{3F^{ii}u_{i}^{2}\lambda_{i}^{2}}{|Du|^{4}}
+\left(\frac{\alpha}{u}+u\right)F^{ii}\lambda_{i}+\frac{F^{ii}u_{i}^{2}}{2}+\frac{1}{2}\sum_{i}F^{ii}.
\end{equation}
\end{lemma}

\begin{proof}
The maximum principle shows $Q_{i}=0$ and $F^{ii}Q_{ii}\leq0$ at $x_{0}$. Then we obtain \eqref{Q i} and
\[
\begin{split}
0 \geq {} & \frac{F^{ii}u_{iik}u_{k}}{|Du|^{2}}+\frac{F^{ii}u_{ii}^{2}}{|Du|^{2}}-\frac{2F^{ii}u_{i}^{2}u_{ii}^{2}}{|Du|^{4}} \\
& +\frac{\alpha F^{ii}u_{ii}}{u}-\frac{\alpha F^{ii}u_{i}^{2}}{u^{2}}+F^{ii}u_{i}^{2}+uF^{ii}u_{ii}+\sum_{i}F^{ii}.
\end{split}
\]
Differentiating the equation $F(D^{2}u)=1$ with respect to $x_{k}$, we see that $F^{ii}u_{iik}=0$ and so
\begin{equation}\label{LQ}
0 \geq \frac{F^{ii}\lambda_{i}^{2}}{|Du|^{2}}-\frac{2F^{ii}u_{i}^{2}\lambda_{i}^{2}}{|Du|^{4}}
-\frac{\alpha F^{ii}u_{i}^{2}}{u^{2}}+\left(\frac{\alpha}{u}+u\right)F^{ii}\lambda_{i}+F^{ii}u_{i}^{2}+\sum_{i}F^{ii}.
\end{equation}
It follows from \eqref{Q i} that
\[
\begin{split}
\frac{u_{i}^{2}}{u^{2}} = {} & \frac{1}{\alpha^{2}}\left(\frac{u_{i}\lambda_{i}}{|Du|^{2}}+uu_{i}+x_{i}\right)^{2} \\
\leq {} & \frac{3}{\alpha^{2}}\left(\frac{u_{i}^{2}\lambda_{i}^{2}}{|Du|^{4}}+u^{2}u_{i}^{2}+x_{i}^{2}\right) \\
\leq {} & \frac{3}{\alpha^{2}}\left(\frac{u_{i}^{2}\lambda_{i}^{2}}{|Du|^{4}}+Cu_{i}^{2}+C\right),
\end{split}
\]
which implies
\[
\frac{\alpha F^{ii}u_{i}^{2}}{u^{2}} \leq \frac{3}{\alpha}
\left(\frac{F^{ii}u_{i}^{2}\lambda_{i}^{2}}{|Du|^{4}}+CF^{ii}u_{i}^{2}+C\sum_{i}F^{ii}\right).
\]
Substituting this into \eqref{LQ},
\[
\begin{split}
0 \geq {} & \frac{F^{ii}\lambda_{i}^{2}}{|Du|^{2}}-\left(2+\frac{3}{\alpha}\right)\frac{F^{ii}u_{i}^{2}\lambda_{i}^{2}}{|Du|^{4}}
+\left(\frac{\alpha}{u}+u\right)F^{ii}\lambda_{i} \\
& +\left(1-\frac{C}{\alpha}\right)F^{ii}u_{i}^{2}+\left(1-\frac{C}{\alpha}\right)\sum_{i}F^{ii}.
\end{split}
\]
After increasing $\alpha$ if necessary, we obtain \eqref{Q ii}.
\end{proof}

\begin{lemma}\label{C1 inequality}
At $x_{0}$, we have
\[
\frac{3F^{ii}u_{i}^{2}\lambda_{i}^{2}}{|Du|^{4}} \leq \frac{F^{ii}\lambda_{i}^{2}}{|Du|^{2}}+\frac{C\alpha}{(-u)}.
\]
\end{lemma}

\begin{proof}
Define the index set by
\[
I = \big\{i\in\{1,\cdots,n\} \ | \ 3u_{i}^{2}\leq |Du|^{2}\big\}.
\]
It follows that
\begin{equation}\label{C1 inequality eqn 1}
\sum_{i\in I}\frac{3F^{ii}u_{i}^{2}\lambda_{i}^{2}}{|Du|^{4}} \leq \sum_{i\in I}\frac{F^{ii}\lambda_{i}^{2}}{|Du|^{2}}.
\end{equation}
For $i\notin I$, we have $3u_{i}^{2}>|Du|^{2}$.  Then \eqref{Q i} shows
\[
\lambda_{i} = -\left(\frac{\alpha}{u}+u+\frac{x_{i}}{u_{i}}\right)|Du|^{2}.
\]
Increasing $\alpha$ if necessary, we obtain
\[
\frac{\alpha}{C(-u)}\cdot|Du|^{2} \leq \lambda_{i} \leq \frac{C\alpha}{(-u)}\cdot|Du|^{2}, \ i\notin I.
\]
Without loss of generality, we assume that $|Du|^{2}$ is large enough such that $\lambda_{i}\geq D_{1}$. Then condition D shows $F^{ii}\lambda_{i}\leq D_{2}$ and so
\[
\sum_{i\notin I}\frac{3F^{ii}u_{i}^{2}\lambda_{i}^{2}}{|Du|^{4}}
= \sum_{i\notin I}\left( 3F^{ii}\lambda_{i}\cdot\frac{u_{i}^{2}}{|Du|^{2}}\cdot\frac{\lambda_{i}}{|Du|^{2}} \right) \leq \frac{C\alpha}{(-u)}.
\]
\end{proof}

Now we are in a position to prove Proposition \ref{C1 estimate}. Combining \eqref{Q ii} and Lemma \ref{C1 inequality},
\[
0 \geq
\left(\frac{\alpha}{u}+u\right)F^{ii}\lambda_{i}-\frac{C}{(-u)}+\frac{F^{ii}u_{i}^{2}}{2}+\frac{1}{2}\sum_{i}F^{ii}.
\]
Using Proposition \ref{homog}, we have $\sum_{i}F^{ii}\lambda_{i}=1$ and so
\[
\sum_{i}F^{ii}u_{i}^{2}+\sum_{i}F^{ii} \leq \frac{C}{(-u)}.
\]
Using condition N, we obtain
\[
F^{ii} \geq \frac{1}{N_{1}}\left[\frac{(-u)}{C}\right]^{N_{2}}, \  i = 1,\cdots,n
\]
and so
\[
|Du|^{2} \leq N_{1}\left[\frac{C}{(-u)}\right]^{N_{2}}\sum_{i}F^{ii}u_{i}^{2} \leq \frac{C}{(-u)^{N_{2}+1}}.
\]
Increasing $\alpha$ such that $2\alpha>N_{2}+1$, we obtain Proposition \ref{C1 estimate}.
\end{proof}

\subsection{$C^{2}$ estimate}

\begin{proposition}
Under the same assumptions of Theorem \ref{intgrad}, there exist constants $\beta$ and $C$ depending only on $\|u\|_{C^{0}(\Omega)}$, $\sup_{\Omega}\left((-u)^{\alpha}|Du|\right)$, $\alpha$, $N_{1}$, $N_{2}$ and $\diam(\Omega)$ such that
\[
\sup_{\Omega}\Big((-u)^{\beta}|D^{2}u|\Big) \leq C.
\]
\end{proposition}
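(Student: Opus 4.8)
The plan is to establish a decay-weighted interior second order estimate in the spirit of the Pogorelov and Chou--Wang interior estimates, feeding the already-proved gradient bound of Proposition \ref{C1 estimate} into the auxiliary function. Let $\lambda_1=\lambda_1(D^2u)$ be the largest eigenvalue of the Hessian and consider
\[
Q=\beta\log(-u)+\log\lambda_1+\frac{a}{2}\big((-u)^{\alpha}|Du|\big)^{2},
\]
with $\beta$ large and $a$ small, to be fixed. By Proposition \ref{C1 estimate} the last summand is bounded on $\Omega$, so it does not affect the final bound; on the other hand, differentiating it twice contributes the positive term $a(-u)^{2\alpha}\sum_iF^{ii}\lambda_i^2$ to $F^{ii}Q_{ii}$ (after using $\sum_iF^{ii}u_{iik}=0$), which is the decay-weighted substitute for the term $\frac{a}{2}|Du|^2$ of Pogorelov's classical argument -- unavailable here since only $(-u)^{\alpha}|Du|$, not $|Du|$ itself, is controlled. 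Since admissibility forces $\Gamma\subset\Gamma_1$, hence $|\lambda_n|\leq(n-1)\lambda_1$ and $|D^2u|\leq C\lambda_1$, it suffices to bound $(-u)^{\beta}\lambda_1$. We may therefore assume $Q$ has an interior maximum at a point $x_0$ with $\lambda_1(x_0)$ as large as we wish, rotate so that $D^2u(x_0)$ is diagonal, and deal with a possible multiplicity of $\lambda_1$ by the usual smoothing.

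At $x_0$ we have $Q_i=0$ and $F^{ii}Q_{ii}\leq0$. The computation combines: Lemma \ref{homog}, to turn the $\sum_iF^{ii}\lambda_i$-sums arising from $\log(-u)$ and from the gradient term into powers of $(-u)$; the first and second derivatives of the equation $F(D^2u)=1$, the first killing $\sum_iF^{ii}u_{iik}$ and the second, in the $x_1$-direction, yielding $\sum_iF^{ii}u_{11ii}\geq0$ together with the off-diagonal third order terms that, via the concavity of $F$ (Definition \ref{concave} and \eqref{derivative of F}) and the standard concavity bookkeeping -- possibly split into cases according to whether the remaining eigenvalues are comparable to $\lambda_1$ -- absorb the dangerous terms coming from $\log\lambda_1$; the identity $Q_i=0$, to express $u_{11i}/\lambda_1$ through $u_i/u$, $\lambda_i$ and lower order data; and the gradient estimate $|Du|\leq C(-u)^{-\alpha}$, to control the lower order expressions so produced, the residual multiple of $(-u)^{2\alpha}\sum_iF^{ii}\lambda_i^2$ being absorbed by taking $a$ small. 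One is then led, at $x_0$, to bounds of the form $\sum_iF^{ii}\leq C(-u)^{-m}$ and $(-u)^{2\alpha}\sum_iF^{ii}\lambda_i^2\leq C(-u)^{-m}$ for some exponent $m$ depending only on $\alpha$, $\beta$ and the data.

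To conclude, condition N applied with $C(-u)^{-m}$ in place of $C$ yields $F^{ii}(x_0)\geq\frac1{N_1}\big[(-u)^{m}/C\big]^{N_2}$ for all $i$, and since $F^{11}\lambda_1^2\leq\sum_iF^{ii}\lambda_i^2$,
\[
\lambda_1(x_0)^2=\frac{1}{F^{11}}\,F^{11}\lambda_1^2\leq\frac{1}{F^{11}}\sum_iF^{ii}\lambda_i^2\leq C\,(-u(x_0))^{-m-2\alpha-mN_2}.
\]
Choosing $\beta$ large enough (depending only on $m$, $\alpha$, $N_2$) and using that the last summand of $Q$ is bounded, the inequality $Q\leq Q(x_0)$ propagates this to $(-u)^{\beta}\lambda_1\leq C$ throughout $\Omega$, which together with $|D^2u|\leq C\lambda_1$ is the claimed estimate.

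The hard part is the second step: making the third order error terms and the size of $\sum_iF^{ii}$ close up in this decay-weighted framework. In the Euclidean-distance-weighted setting of Chou--Wang one may add the unweighted $\frac{a}{2}|Du|^2$ and the corresponding computation is cleaner; here it is precisely the gradient estimate of Proposition \ref{C1 estimate} (and, through it, conditions N and D) that makes the argument work, and the careful bookkeeping of exponents so that a single $\beta$ suffices -- together with the concavity manipulation needed to absorb the off-diagonal third order terms -- is where the delicacy lies.
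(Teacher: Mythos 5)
Your overall strategy is the right one and matches the paper's in spirit: a Pogorelov-type auxiliary function containing $\log\lambda_1$, $\beta\log(-u)$ and a decay-weighted gradient term, with the gradient bound of Proposition \ref{C1 estimate} fed in, the third-order terms controlled via concavity and a case split on the eigenvalues, and condition~N invoked at the end. However, there are two concrete gaps in the ansatz itself that cannot be repaired by the "careful bookkeeping" you defer.

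\emph{The exponent on the decay weight is too small.} You take the gradient summand $\tfrac{a}{2}\big((-u)^{\alpha}|Du|\big)^2=\tfrac{a}{2}(-u)^{2\alpha}|Du|^2$, i.e. weight exponent $\gamma=2\alpha$. Differentiating $F^{ii}\big((-u)^{\gamma}|Du|^2\big)_{ii}$ produces, besides the good term $2(-u)^{\gamma}F^{ii}\lambda_i^2$, the cross term $-4\gamma(-u)^{\gamma-1}F^{ii}\lambda_i u_i^2$. Absorbing this by Cauchy--Schwarz into a fraction of the good term costs $\gamma^2(-u)^{\gamma-2}F^{ii}u_i^4\le C(-u)^{\gamma-2}|Du|^4\sum_iF^{ii}$. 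Using the gradient estimate $|Du|\le C(-u)^{-\alpha}$, this is $C(-u)^{\gamma-2-4\alpha}\sum_iF^{ii}$; to have this bounded by $C\sum_iF^{ii}$ one needs $\gamma\ge 4\alpha+2$. With your $\gamma=2\alpha$ the coefficient is $(-u)^{-2\alpha-2}$, which is unbounded near $\partial\Omega$ and cannot be absorbed (nor does taking $a$ small help, since the good term scales with $a$ as well). The same issue appears in the residual $-\gamma(-u)^{\gamma-1}|Du|^2$. The paper takes $\gamma=4\alpha+2$ precisely for this reason; this is not cosmetic.

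\emph{You have no source of a positive $\sum_iF^{ii}$.} Your $Q$ omits a term like $\tfrac{A}{2}|x|^2$. In the maximum-principle inequality, the only place a positive multiple of $\sum_iF^{ii}$ can enter is from $F^{ii}\big(|x|^2\big)_{ii}=2\sum_iF^{ii}$; $\beta\log(-u)$ contributes $\tfrac{\beta}{u}F^{ii}\lambda_i=\tfrac{\beta}{u}$ (a scalar of the wrong sign by Lemma~\ref{homog}), and the gradient summand only gives $F^{ii}\lambda_i^2$-type terms. Without $\sum_iF^{ii}$ on the good side you cannot derive the bound $\sum_iF^{ii}\le C(-u)^{-m}$ that you assert at $x_0$, and condition~N has nothing to bite on; the final step collapses. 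One more remark: using a \emph{linear} cutoff $t\mapsto\tfrac{a}{2}t$ rather than the nonlinear $\vp$ with $\vp''=3(\vp')^2$ means you lose the exact cancellation of the $(\vp')^2F^{ii}\big((-u)^\gamma|Du|^2\big)_i^2$ terms coming from squaring the first-derivative identity; in the case where all eigenvalues are comparable to $\lambda_1$ this may be rescued by taking $a$ small (once $\gamma$ is corrected), but this is delicate and you have not done it. In short: correct the weight exponent to (at least) $4\alpha+2$, add $\tfrac{A}{2}|x|^2$, and either use a nonlinear $\vp$ or justify carefully the absorption with $a$ small; with these fixes your outline becomes the paper's proof.
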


\begin{remark}
We emphasize that this interior Hessian bound holds without assuming condition D on $u$. What really matters though is the a priori knowledge of the interior gradient estimate $\sup_{\Omega}(-u)^{\alpha}|Du|\leq C$. On the bright side, such estimate holds for all concave Hessian operators satisfying condition N and so it provides an unified treatment. Another important advantage is that this bound is {\it purely} interior in the sense that it does not depend on the global gradient bound.
\end{remark}

\begin{proof}
By $\Gamma\subset\Gamma_{1}$, we obtain $\Delta u>0$. It then suffices to establish the upper bound of $D^{2}u$. For $(x,\xi)\in\Omega\times\mathbb{S}^{n}$, we consider the quantity
\[
Q(x,\xi) = \log u_{\xi\xi}+\vp\big((-u)^{\gamma}|Du|^{2}\big)+\beta\log(-u)+\frac{A}{2}|x|^{2},
\]
where
\[
\vp(t) = -\frac{1}{3}\log\left(3\sup_{\Omega}\left((-u)^{\gamma}|Du|^{2}\right)-t\right), \ \ \gamma = 4\alpha+2,
\]
$\beta$ and $A$ are large constants to be determined later. It is clear that
\begin{equation}\label{vp}
\vp'' = 3(\vp')^{2}.
\end{equation}
Suppose that $(x_{0},\xi_{0})$ is the maximum point of $Q$. We choose the coordinate system such that
\[
u_{ij}(x_{0}) = \lambda_{i}\delta_{ij}, \ \ \lambda_{1} \geq \lambda_{2} \geq \cdots \geq \lambda_{n}.
\]
Then $\xi_{0}=(1,0,\cdots,0)$ and the new quantity
\[
\hat{Q} = \log u_{11}+\vp\big((-u)^{\gamma}|Du|^{2}\big)+\beta\log(-u)+\frac{A}{2}|x|^{2}
\]
still achieves its maximum at $x_{0}$.

\begin{lemma}
At $x_{0}$, we have
\begin{equation}\label{first derivative}
\frac{u_{11i}}{\lambda_{1}}+\vp'\big((-u)^{\gamma}|Du|^{2}\big)_{i}+\frac{\beta}{u}u_{i}+Ax_{i} = 0
\end{equation}
and
\begin{equation}\label{second derivative}
\begin{split}
0 \geq {} & -2\sum_{i>1}\frac{F^{1i,i1}u_{11i}^{2}}{\lambda_{1}}-\frac{F^{ii}u_{11i}^{2}}{\lambda_{1}^{2}}+\frac{(-u)^{\gamma}}{C}F^{ii}\lambda_{i}^{2} \\
& +\vp''F^{ii}\big((-u)^{\gamma}|Du|^{2}\big)_{i}^{2}-\frac{\beta}{u^{2}}F^{ii}u_{i}^{2}+\sum_{i}F^{ii}+\frac{\beta}{u}-C.
\end{split}
\end{equation}
\end{lemma}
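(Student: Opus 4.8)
The plan is to derive the two displayed inequalities by applying the maximum principle to the auxiliary quantity $\hat{Q}$ at the point $x_0$, and then combining this with the already-established $C^0$ and $C^1$ estimates to extract a bound on $\lambda_1=u_{11}(x_0)$. Since $\hat Q$ achieves its interior maximum at $x_0$, we have $\hat Q_i=0$ and $F^{ii}\hat Q_{ii}\le 0$ at that point; the first relation is exactly \eqref{first derivative}, obtained by differentiating each term of $\hat Q$ once. For \eqref{second derivative} I would differentiate $\hat Q$ twice, apply $F^{ii}$, and process each summand: the second derivative of $\log u_{11}$ produces $\tfrac{F^{ii}u_{11ii}}{\lambda_1}-\tfrac{F^{ii}u_{11i}^2}{\lambda_1^2}$, and then one differentiates the equation $F(D^2u)=1$ twice in the $\xi_0=e_1$ direction to rewrite $F^{ii}u_{ii11}$: this gives $F^{ii}u_{ii11}=-F^{ij,kl}u_{ij1}u_{kl1}$, and using the formula \eqref{derivative of F} for $F^{ij,kl}$ at a diagonal matrix together with concavity of $F$ (so $f_{ij}\le 0$) one keeps only the off-diagonal Hessian-of-$F$ contribution, which is exactly the term $-2\sum_{i>1}\tfrac{F^{1i,i1}u_{11i}^2}{\lambda_1}$ (note $\tfrac{f_1-f_i}{\lambda_1-\lambda_i}\le 0$ since $\lambda_1\ge\lambda_i$ and $f$ is concave along that segment... actually the sign here is the delicate combinatorial point). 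Commuting covariant/ordinary derivatives is trivial here since we are in $\mathbb R^n$, so $u_{ii11}=u_{11ii}$.

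Next I would handle the remaining terms: $\vp$ applied to $(-u)^\gamma|Du|^2$ contributes, upon $F^{ii}\partial_i^2$, a good term $\vp''F^{ii}\big((-u)^\gamma|Du|^2\big)_i^2$ plus a first-order piece $\vp'F^{ii}\big((-u)^\gamma|Du|^2\big)_{ii}$; expanding the latter and using the $C^1$ bound $\sup_\Omega(-u)^\alpha|Du|\le C$ (which is where the choice $\gamma=4\alpha+2$ enters — it forces the weight $(-u)^\gamma|Du|^2$ and its relevant derivatives to stay controlled relative to powers of $(-u)$), one can absorb all but a definite fraction of the term $F^{ii}\lambda_i^2$, yielding the $\tfrac{(-u)^\gamma}{C}F^{ii}\lambda_i^2$ term displayed. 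The term $\beta\log(-u)$ contributes $-\tfrac{\beta}{u^2}F^{ii}u_i^2+\tfrac{\beta}{u}F^{ii}\lambda_i = -\tfrac{\beta}{u^2}F^{ii}u_i^2+\tfrac{\beta}{u}$ after invoking $\sum_i F^{ii}\lambda_i=1$ from Lemma \ref{homog}; and $\tfrac{A}{2}|x|^2$ contributes $A\sum_i F^{ii}$, which for $A$ large dominates any error terms of the form $C\sum_i F^{ii}$, leaving $\sum_i F^{ii}-C$. Collecting everything produces \eqref{second derivative}.

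The main obstacle is the third-order term $-2\sum_{i>1}\tfrac{F^{1i,i1}u_{11i}^2}{\lambda_1}-\tfrac{F^{ii}u_{11i}^2}{\lambda_1^2}$. This combination has an indefinite sign in general, and the standard trick (going back to Chou–Wang and used throughout the $k$-Hessian literature) is to split the index set $\{i>1\}$ according to whether $\lambda_i$ is comparable to $\lambda_1$ or much smaller: for indices with $\lambda_i$ close to $\lambda_1$ one uses $F^{1i,i1}=\tfrac{f_1-f_i}{\lambda_1-\lambda_i}$ directly together with the first-order relation \eqref{first derivative} to bound $u_{11i}$, while for the small eigenvalues one estimates $\tfrac{f_1-f_i}{\lambda_1-\lambda_i}\le \tfrac{C f_1}{\lambda_1}$ or uses $-\tfrac{F^{ii}u_{11i}^2}{\lambda_1^2}$ to absorb. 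In this excerpt, however, the lemma only asks to \emph{derive} inequality \eqref{second derivative}, so the third-order term is simply carried along unprocessed; the genuine work of taming it is deferred to the argument following the lemma. Thus, strictly for proving the stated lemma, the real care is bookkeeping: keeping track of which terms are "good" (positive, to be exploited) versus "error" (to be absorbed by $A\sum F^{ii}$, by $\vp''$, or by the $C^1$ bound), and verifying that the Cauchy–Schwarz splittings used to pass from the raw expansion to the displayed form are uniform in the constants allowed by the statement.
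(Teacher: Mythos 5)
Your proposal is correct and follows essentially the same route as the paper: apply the maximum principle to $\hat Q$ at $x_0$, rewrite $F^{ii}u_{ii11}$ via the twice-differentiated equation and concavity, expand $\vp'F^{ii}\big((-u)^\gamma|Du|^2\big)_{ii}$ using $F^{ii}u_{iik}=0$, Cauchy--Schwarz, $\sum_i F^{ii}\lambda_i=1$, and the $C^1$ bound with $\gamma=4\alpha+2$, and then absorb the remaining errors with the $A$-term. One small clarification: the sign $(f_1-f_i)/(\lambda_1-\lambda_i)\le 0$ is not a delicate point but the standard consequence of $f$ being symmetric and concave (equivalently $F^{11}\le F^{ii}$ when $\lambda_1\ge\lambda_i$), and passing from $-F^{ij,kl}u_{ij1}u_{kl1}$ to $-2\sum_{i>1}F^{1i,i1}u_{11i}^2$ also discards the nonnegative off-diagonal terms with both indices $>1$, not only the $f_{ik}$ block.
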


\begin{proof}
The maximum principle shows $\hat{Q}_{i}=0$ and $F^{ii}\hat{Q}_{ii}\leq0$ at $x_{0}$. Then we obtain \eqref{first derivative} and
\begin{equation}\label{second derivative eqn 1}
\begin{split}
0 \geq {} & \frac{F^{ii}u_{ii11}}{\lambda_{1}}-\frac{F^{ii}u_{11i}^{2}}{\lambda_{1}^{2}}+\vp'F^{ii}\big((-u)^{\gamma}|Du|^{2}\big)_{ii} \\
& +\vp''F^{ii}\big((-u)^{\alpha}|Du|^{2}\big)_{i}^{2}+\frac{\beta}{u}F^{ii}u_{ii}-\frac{\beta}{u^{2}}F^{ii}u_{i}^{2}+A\sum_{i}F^{ii}.
\end{split}
\end{equation}
For the first term of \eqref{second derivative eqn 1}, differentiating the equation $F(D^{2}u)=1$ with respect to $x_{1}$ twice, using \eqref{derivative of F} and the concavity of $f$,
\begin{equation}\label{second derivative eqn 2}
F^{ii}u_{ii11} = -F^{ij,kl}u_{ij1}u_{kl1}
\geq -\sum_{i\neq j}F^{ij,ji}u_{ij1}^{2}
\geq -2\sum_{i>1}F^{1i,i1}u_{11i}^{2}.
\end{equation}
For the third term of \eqref{second derivative eqn 1}, differentiating the equation $F(D^{2}u)=1$ with respect to $x_{k}$, we obtain $F^{ii}u_{iik}=0$. Combining this with the Cauchy-Schwarz inequality and $F^{ii}\lambda_{i}=1$ (see Proposition \ref{homog}),
\[
\begin{split}
& F^{ii}\left((-u)^{\gamma}|Du|^{2}\right)_{ii} \\
= {} & (-u)^{\gamma}F^{ii}\big(|Du|^{2}\big)_{ii}+2F^{ii}\big((-u)^{\gamma}\big)_{i}\big(|Du|^{2}\big)_{i}+|Du|^{2}F^{ii}\big((-u)^{\gamma}\big)_{ii} \\
= {} & 2(-u)^{\gamma}F^{ii}\lambda_{i}^{2}-4\gamma(-u)^{\gamma-1}F^{ii}\lambda_{i}u_{i}^{2}
+\gamma(\gamma-1)(-u)^{\gamma-2}|Du|^2F^{ii}u_{i}^{2}\\
&-\gamma(-u)^{\gamma-1}|Du|^{2}F^{ii}\lambda_{i} \\
\geq {} & (-u)^{\gamma}F^{ii}\lambda_{i}^{2}-4\gamma^{2}(-u)^{\gamma-2}F^{ii}u_{i}^{4}
+\gamma(\gamma-1)(-u)^{\gamma-2}|Du|^2F^{ii}u_{i}^{2}\\
&-\gamma(-u)^{\gamma-1}|Du|^{2},
\end{split}
\]
Dropping the third term and using $\gamma=4\alpha+2$,
\begin{equation}\label{second derivative eqn 3}
F^{ii}\big((-u)^{\gamma}|Du|^{2}\big)_{ii}
\geq (-u)^{\gamma}F^{ii}\lambda_{i}^{2}-C\sum_{i}F^{ii}-C.
\end{equation}
Substituting \eqref{second derivative eqn 2} and \eqref{second derivative eqn 3} into \eqref{second derivative eqn 1},
\[
\begin{split}
0 \geq {} & -2\sum_{i>1}\frac{F^{1i,i1}u_{11i}^{2}}{\lambda_{1}}-\frac{F^{ii}u_{11i}^{2}}{\lambda_{1}^{2}}+\vp'(-u)^{\gamma}F^{ii}\lambda_{i}^{2} \\
& +\vp''F^{ii}\big((-u)^{\alpha}|Du|^{2}\big)_{i}^{2}+\frac{\beta}{u}F^{ii}\lambda_{i}-\frac{\beta}{u^{2}}F^{ii}u_{i}^{2}+(A-C\vp')\sum_{i}F^{ii}-C\vp'.
\end{split}
\]
Using $F^{ii}\lambda_{i}=1$ (see Proposition \ref{homog}), $C^{-1}\leq\vp'\leq C$ and choosing $A=C+1$, we obtain \eqref{second derivative}.
\end{proof}

We divide the proof into two cases. The constant $\ve$ is a constant to be determined later.

\bigskip
\noindent
{\bf Case 1.} $\lambda_{n}<-\ve\lambda_{1}$.
\bigskip

By \eqref{first derivative} and the Cauchy-Schwarz inequality, we see that
\[
\frac{F^{ii}u_{11i}^{2}}{\lambda_{1}^{2}}
\leq 3(\vp')^{2}F^{ii}\big((-u)^{\gamma}|Du|^{2}\big)_{i}^{2}+\frac{3\beta^{2}}{u^{2}}F^{ii}u_{i}^{2}+3A^{2}F^{ii}x_{i}^{2}.
\]
Plugging this into \eqref{second derivative}, dropping the first term and using $\vp''=3(\vp')^{2}$ (see \eqref{vp}),
\begin{equation}\label{case 1 eqn 2}
\begin{split}
0 \geq {} & \frac{(-u)^{\gamma}}{C}F^{ii}\lambda_{i}^{2}+\left(\vp''-3(\vp')^{2}\right)F^{ii}\big((-u)^{\gamma}|Du|^{2}\big)_{i}^{2} \\[1mm]
& -\frac{\beta+3\beta^{2}}{u^{2}}F^{ii}u_{i}^{2}-3A^{2}F^{ii}x_{i}^{2}+\sum_{i}F^{ii}+\frac{\beta}{u}-C \\
\geq {} & \frac{(-u)^{\gamma}}{C}F^{nn}\lambda_{n}^{2}-\frac{C}{u^{2}}F^{ii}u_{i}^{2}-C\sum_{i}F^{ii}+\frac{C}{u}-C.
\end{split}
\end{equation}
Since $\lambda_{1}\geq\cdots\geq\lambda_{n}$, then (see e.g. \cite{EH,S})
\[
F^{11} \leq F^{22} \leq \cdots \leq F^{nn}.
\]
Combining this with $\lambda_{n}<-\ve\lambda_{1}$,
\[
\frac{(-u)^{\gamma}}{C}F^{nn}\lambda_{n}^{2} \geq \frac{(-u)^{\gamma}}{C}\ve^{2}\lambda_{1}^{2}\sum_{i}F^{ii}.
\]
Substituting this into \eqref{case 1 eqn 2},
\[
0 \geq \frac{(-u)^{\gamma}}{C}\ve^{2}\lambda_{1}^{2}\sum_{i}F^{ii}-\frac{C}{(-u)^{2\alpha+2}}\sum_{i}F^{ii}+\frac{C}{u}.
\]
Without loss of generality, we assume
\[
\frac{(-u)^{\gamma}}{C}\ve^{2}\lambda_{1}^{2} \geq \frac{2C}{(-u)^{2\alpha+2}}+2
\]
and so
\begin{equation}\label{case 1 eqn 1}
\frac{(-u)^{\gamma}}{2C}\ve^{2}\lambda_{1}^{2}\sum_{i}F^{ii}+\sum_{i}F^{ii} \leq \frac{C}{(-u)}.
\end{equation}
Then condition N shows
\[
F^{ii} \geq \frac{1}{N_{1}}\left[\frac{(-u)}{C}\right]^{N_{2}}, \ i=1,\cdots,n.
\]
Combining this with \eqref{case 1 eqn 1}, we obtain $(-u)^{N_{2}+\gamma+1}\lambda_{1}\leq C$. Increasing $\beta$ such that $\beta\geq N_{2}+\gamma+1$, we are done.

\bigskip
\noindent
{\bf Case 2.} $\lambda_{n}\geq-\ve\lambda_{1}$.
\bigskip

Define the index set by
\[
I = \left\{i\in\{1,\cdots,n\} \ | \ F^{ii}\geq \ve^{-1}F^{11} \right\}.
\]
Then \eqref{first derivative} and the Cauchy-Schwarz inequality show
\begin{equation}\label{in I}
\begin{split}
& \sum_{i\in I}\frac{F^{ii}u_{11i}^{2}}{\lambda_{1}^{2}}+\beta\sum_{i\in I}\frac{F^{ii}u_{i}^{2}}{u^{2}} \\
= {} & \sum_{i\in I}\frac{F^{ii}u_{11i}^{2}}{\lambda_{1}^{2}}
+\frac{1}{\beta}\sum_{i\in I}F^{ii}\left(\frac{u_{11i}}{\lambda_{1}}+\vp'\big((-u)^{\gamma}|Du|^{2}\big)_{i}+Ax_{i}\right)^{2} \\
\leq {} & \left(1+\frac{3}{\beta}\right)\sum_{i\in I}\frac{F^{ii}u_{11i}^{2}}{\lambda_{1}^{2}}
+\frac{3}{\beta}(\vp')^{2}\sum_{i\in I}F^{ii}\big((-u)^{\gamma}|Du|^{2}\big)_{i}^{2}+\frac{CA^{2}}{\beta}\sum_{i\in I}F^{ii}
\end{split}
\end{equation}
and
\begin{equation}\label{not in I}
\begin{split}
& \sum_{i\notin I}\frac{F^{ii}u_{11i}^{2}}{\lambda_{1}^{2}}+\beta\sum_{i\notin I}\frac{F^{ii}u_{i}^{2}}{u^{2}} \\
= {} & \sum_{i\notin I}F^{ii}\left(\vp'\big((-u)^{\alpha}|Du|^{2}\big)_{i}+\frac{\beta}{u}u_{i}+Ax_{i}\right)^{2}+\beta\sum_{i\notin I}\frac{F^{ii}u_{i}^{2}}{u^{2}} \\
\leq {} & 3(\vp')^{2}\sum_{i\in I}F^{ii}\big((-u)^{\gamma}|Du|^{2}\big)_{i}^{2}
+(3\beta^{2}+\beta)\sum_{i\notin I}\frac{F^{ii}u_{i}^{2}}{u^{2}}+3A^{2}\sum_{i\notin I}F^{ii}x_{i}^{2} \\
\leq {} & 3(\vp')^{2}\sum_{i\in I}F^{ii}\big((-u)^{\gamma}|Du|^{2}\big)_{i}^{2}+\left[\frac{C(3\beta^{2}+\beta)}{(-u)^{2\alpha+2}}+CA^{2}\right]\frac{F^{11}}{\ve},
\end{split}
\end{equation}
where we used $F^{ii}<\ve^{-1}F^{11}$ for $i\notin I$ in the last line. Substituting \eqref{in I} and \eqref{not in I} into \eqref{second derivative}, we obtain
\begin{equation}\label{case 2 eqn 1}
\begin{split}
0 \geq {} & -2\sum_{i>1}\frac{F^{1i,i1}u_{11i}^{2}}{\lambda_{1}}
-\left(1+\frac{3}{\beta}\right)\sum_{i\in I}\frac{F^{ii}u_{11i}^{2}}{\lambda_{1}^{2}}
+\frac{(-u)^{\gamma}}{C}F^{ii}\lambda_{i}^{2} \\
& -\bigg[\frac{C(3\beta^{2}+\beta)}{(-u)^{2\alpha+2}}+CA^{2}\bigg]\frac{F^{11}}{\ve}
+\left(\vp''-3(\vp')^{2}\right)F^{ii}\big((-u)^{\gamma}|Du|^{2}\big)_{i}^{2}   \\
& +\left(1-\frac{CA^{2}}{\beta}\right)\sum_{i}F^{ii}+\frac{C}{u}.
\end{split}
\end{equation}
Without loss of generality, we assume that
\begin{equation*}
\frac{(-u)^{\gamma}}{C}\lambda_{1}^{2} \geq \frac{1}{\ve}\bigg[\frac{C(3\beta^{2}+\beta)}{(-u)^{2\alpha+2}}+CA^{2}\bigg].
\end{equation*}
Choose $\beta$ sufficiently large such that $\frac{CA^{2}}{\beta}\leq\frac{1}{2}$ and recall $\vp''=3(\vp')^{2}$ (see \eqref{vp}). Then \eqref{case 2 eqn 1} becomes
\begin{equation}\label{case 2 eqn 2}
0 \geq -2\sum_{i>1}\frac{F^{1i,i1}u_{11i}^{2}}{\lambda_{1}}
-\frac{3}{2}\sum_{i\in I}\frac{F^{ii}u_{11i}^{2}}{\lambda_{1}^{2}}+\frac{(-u)^{\gamma}}{C}\sum_{i>1}F^{ii}\lambda_{i}^{2}+\frac{1}{2}\sum_{i}F^{ii}+\frac{C}{u}.
\end{equation}
Using $1\notin I$ and $-F^{1i,i1}=\frac{F^{ii}-F^{11}}{\lambda_{1}-\lambda_{i}}$ (see \eqref{derivative of F}),
\[
\begin{split}
& -2\sum_{i>1}\frac{F^{1i,i1}u_{11i}^{2}}{\lambda_{1}}
-\frac{3}{2}\sum_{i\in I}\frac{F^{ii}u_{11i}^{2}}{\lambda_{1}^{2}} \\
\geq {} & -2\sum_{i\in I}\frac{F^{1i,i1}u_{11i}^{2}}{\lambda_{1}}
-\frac{3}{2}\sum_{i\in I}\frac{F^{ii}u_{11i}^{2}}{\lambda_{1}^{2}} \\
= {} & 2\sum_{i\in I}\frac{(F^{ii}-F^{11})u_{11i}^{2}}{\lambda_{1}(\lambda_{1}-\lambda_{i})}
-\frac{3}{2}\sum_{i\in I}\frac{F^{ii}u_{11i}^{2}}{\lambda_{1}^{2}}.
\end{split}
\]
For $i\in I$, we have $F^{11}\leq\ve F^{ii}$. Combining this with $\lambda_{i}\geq\lambda_{n}\geq-\ve\lambda_{1}$, we obtain
\[
2\sum_{i\in I}\frac{(F^{ii}-F^{11})u_{11i}^{2}}{\lambda_{1}(\lambda_{1}-\lambda_{i})}
\geq \frac{2(1-\ve)}{1+\ve}\sum_{i\in I}\frac{F^{ii}u_{11i}^{2}}{\lambda_{1}^{2}}.
\]
Choosing $\ve$ sufficiently small, we obtain
\begin{equation}\label{third order terms}
\begin{split}
& -2\sum_{i>1}\frac{F^{1i,i1}u_{11i}^{2}}{\lambda_{1}}
-\frac{3}{2}\sum_{i\in I}\frac{F^{ii}u_{11i}^{2}}{\lambda_{1}^{2}}
\geq \left(\frac{2(1-\ve)}{1+\ve}-\frac{3}{2}\right)\sum_{i\in I}\frac{F^{ii}u_{11i}^{2}}{\lambda_{1}^{2}} \geq 0.
\end{split}
\end{equation}
By \eqref{case 2 eqn 2} and \eqref{third order terms}, we see that
\[
\frac{(-u)^{\gamma}}{C}F^{ii}\lambda_{i}^{2}+\frac{1}{2}\sum_{i}F^{ii} \leq \frac{C}{(-u)}.
\]
Combining this with condition N and arguing as in Case 1, we are done.
\end{proof}

\section{Liouville theorem}\label{Liouville theorem}
By Remark \ref{condition N remark}, Proposition \ref{p-1} and \ref{khess}, Corollaries \ref{Liouville k Hessian} and \ref{Liouville p MA} follow immediately from Theorem \ref{lio}. To prove Theorem \ref{lio}, we follow the argument of \cite{TW}.

\begin{proof}
Adding a constant if necessary we may assume that $u(0)=0$. We fix a sufficiently large radius $R\geq 1$. Consider the function and the set
\[
v_R(y):=\frac{u(Ry)-R^2}{R^2}, \ \ \Om_R := \lbrace y\in\mathbb R^n|\ v_R(y)<0\rbrace.
\]
It then follows from $D_{y}^{2}v_{R}=D_{x}^{2}u$ that
\begin{equation}\label{Liouville v R}
\begin{cases}
\ F(D^{2}v_{R}) = 1 & \mbox {in $\Omega_{R}$}, \\
\ v_{R} = 0 & \mbox{on $\de\Omega_{R}$}.
\end{cases}
\end{equation}
Using the quadratic growth condition of $u$ (i.e. $u(x)\geq C^{-1}|x|-C$), we have
$$\Om_R\subset\lbrace y\in\mathbb R^n \ |\ C^{-1}R^2|y|^2-C<R^2\rbrace\subset\lbrace y\in\mathbb R^n\ |\ |y|^2\leq C(C+1)\rbrace,$$
which implies
\[
\mathrm{diam}(\Omega_{R}) \leq 2\sqrt{C(C+1)}.
\]
Applying Theorem \ref{intgrad} to \eqref{Liouville v R} on each component of $\Omega_{R}$,
\begin{equation}\label{Liouville eqn 1}
(-v_{R})^{\beta}|D^{2}v_{R}| \leq C \ \text{in $\Omega_{R}$}.
\end{equation}
Consider the set
$$\Om'_R:=\lbrace y\in\mathbb R^n \,|\, v_R\leq -\frac12\rbrace=\lbrace y\in\mathbb R^n \,|\, u(Ry)\leq\frac12R^2\rbrace.$$
Then \eqref{Liouville eqn 1} shows
\[
|D^{2}v_{R}| \leq 2^{\beta}C \ \text{in $\Omega_{R}'$}
\]
and so
\[
|D^{2}u| \leq 2^{\beta}C \ \text{in $\tilde{\Omega}_{R}:=\lbrace x\in\mathbb R^n |\ u(x)\leq \frac12R^2\rbrace$}.
\]
Since $R$ is arbitrary, then
\[
\|D^{2}u\|_{C^{0}(\mathbb{R}^{n})} \leq \Lambda
\]
The set $S_{\Lambda}:=\{\lambda\in\Gamma\,|\,f(\lambda)=1,\,|\lambda|\leq\Lambda\}$ is a compact subset of $\Gamma$. There exists a constant $K$ depending only on $\Gamma$, $f$ and $\Lambda$ such that
\[
K^{-1} \leq f_{i}\leq K \  \text{in $S_{\Lambda}$}.
\]
By \eqref{derivative of F}, the above shows that $F(D^{2}u)=1$ is uniformly elliptic in $\mathbb{R}^{n}$. Finally Evans-Krylov theory (see e.g. \cite{GT}) implies that for some $\gamma\in(0,1)$, we have
\[
\|D^2u\|_{C^{\gamma}(\mathbb{R}^{n})}
= \lim_{R\to\infty}\|D^2u\|_{C^{\gamma}(B_{R})}
\leq \lim_{R\to\infty}\frac{C\|D^2u\|_{C^{\gamma}(B_{2R})}}{R^{\gamma}} \leq \lim_{R\to\infty}\frac{C\Lambda}{R^{\gamma}} = 0.
\]
This shows that $D^2u$ is constant and $u$ has to be a quadratic polynomial.
\end{proof}


\begin{thebibliography}{99}

\bibitem[A]{A} Andrews, B., Contraction of convex hypersurfaces in Euclidean space, Calc. Var. Partial Differential Equations 2 (1994), no. 2, 151-171.

\bibitem[BCGJ]{BCGJ} Bao, J.; Chen, J.; Guan, B.; Ji, M., Liouville property and regularity of a Hessian quotient equation,	Amer. J. Math. 125 (2003), no. 2, 301-316.

\bibitem[CL]{CL} Caffarelli, L.; Li, Y., An extension to a theorem of J\"orgens, Calabi, and Pogorelov, Comm. Pure Appl. Math. 56 (2003), no. 5, 549-583.

\bibitem[CNS]{CNS}  Caffarelli, L.; Nirenberg, L.; Spruck, J., The Dirichlet problem for nonlinear second-order elliptic equations. III. Functions of the eigenvalues of the Hessian, Acta Math. 155 (1985), no. 3-4, 261-301.

\bibitem[Ca]{Ca} Calabi, E., Improper affine hyperspheres of convex type and a generalization of a theorem by K. J\"orgens, Michigan Math. J. 5 (1958), 105-126.

\bibitem[CYu]{CYu} Chang, S.-Y. A.; Yuan, Y., A Liouville problem for the sigma-$2$ equation, Discrete Contin. Dyn. Syst. 28 (2010), no. 2, 659-664.

\bibitem[Ch]{Ch} Chen, C., The interior gradient estimate of Hessian quotient equations, J. Differential Equations 259 (2015), no. 3, 1014--1023.

\bibitem[CDH]{CDH} Chen, C.; Dong, W.; Han, F., Interior Hessian estimates for a class of Hessian type equations, Calc. Var. Partial Differential Equations 62 (2023), no. 2, Paper No. 52, 15 pp.

\bibitem[CX]{CX} Chen, L.; Xiang, N., Rigidity theorems for the entire solutions of $2$-Hessian equation, J. Differential Equations 267 (2019), no. 9, 5202-5219.

\bibitem[CYa]{CYa} Cheng, S. Y.; Yau, S. T., On the regularity of the Monge-Amp\`ere equation $det(\partial ^2u/\partial x_i\partial x_j)=F(x,u)$, Comm. Pure Appl. Math. 30 (1977), no. 1, 41-68.

\bibitem[CW]{CW} Chou, K.-S.;  Wang, X.-J., A variational theory of the Hessian equation, Comm. Pure Appl. Math. 54 (2001), no. 9, 1029-1064.
	
\bibitem[D]{D} Dinew, S., Interior estimates for $p$-plurisubharmonic functions, to appear in Indiana Univ. Math. J.

\bibitem[EH]{EH} Ecker, K., Huisken, G., Immersed hypersurfaces with constant Weingarten curvature, Math. Ann. 283 (1989), no. 2, 329-332.

\bibitem[G]{G} Gerhardt, C., Closed Weingarten hypersurfaces in Riemannian manifolds, J. Differential Geom. 43 (1996), no. 3, 612-641.

\bibitem[GT]{GT} Gilbarg, D., Trudinger, N. S., Elliptic partial differential equations of second order, Reprint of the 1998 edition, Classics Math. Springer-Verlag, Berlin, 2001. xiv+517 pp.

\bibitem[GQ]{GQ} Guan, P.; Qiu, G., Interior $C^2$ regularity of convex solutions to prescribing scalar curvature equations, Duke Math. J. 168 (2019), no. 9, 1641-1663.

\bibitem[HL1]{HL} Harvey, F. R.; Lawson, H. B., Tangents to subsolutions: existence and uniqueness, Part I. Ann. Fac. Sci. Toulouse Math. (6) 27 (2018), no. 4, 777-848.

\bibitem[HL2]{HL2} Harvey, F. R.; Lawson, H. B., Determinant majorization and the work of Guo-Phong-Tong and Abja-OLive, Calc. Var. Partial Differential Equations 62 (2023), no. 5, Paper No. 153.

\bibitem[J]{J} J\"orgens, K., \"Uber die L\"osungen der Differentialgleichung $rt-s^{2}=1$, Math. Ann. 127 (1954), 130-134.

\bibitem[LRW]{LRW} Li, M.; Ren, C.; Wang, Z., An interior estimate for convex solutions and a rigidity theorem, J. Funct. Anal. 270 (2016), no. 7, 2691-2714.

\bibitem[LR]{LR} Liu, Y.; Ren, C., Pogorelov type $C^{2}$ estimates for sum Hessian equations and a rigidity theorem, J. Funct. Anal. 284 (2023), no. 1, Paper No. 109726, 32 pp.

\bibitem[P]{P} Pogorelov, A. V., On the improper convex affine hyperspheres, Geometriae Dedicata 1 (1972), no. 1, 33-46.

\bibitem[SY]{SY} Shankar, R.; Yuan, Y., Rigidity for general semiconvex entire solutions to the sigma-$2$ equation, preprint, arXiv:2108.00093.

\bibitem[S]{S} Spruck, J., Geometric aspects of the theory of fully nonlinear elliptic equations, Global theory of minimal surfaces, Amer. Math. Soc., Providence, RI, 2005, 283-309.

\bibitem[TW]{TW} Trudinger, N. S.; Wang, X.-J., The Monge-Amp\`ere equation and its geometric applications, Handbook of geometric analysis. No. 1, 467-524, Adv. Lect. Math. (ALM), 7, International Press, Somerville, MA, 2008.

\bibitem[Tr]{Tr} Trudinger N. S., Weak solutions of Hessian equations, Comm. Partial Differential Equations 22 (1997), no. 7-8, 1251-1261.

\bibitem[WB]{WB} Wang, C.; Bao, J., Liouville property and existence of entire solutions of Hessian equations, Nonlinear Anal. 223 (2022), Paper No. 113020, 18 pp.

\bibitem[WY]{WY} Warren, M.; Yuan, Y., A Liouville type theorem for special Lagrangian equations with constraints, Comm. Partial Differential Equations 33 (2008), no. 4-6, 922-932.

\bibitem[Y1]{Y1} Yuan, Y., A Bernstein problem for special Lagrangian equations, Invent. Math. 150 (2002), no. 1, 117-125.

\bibitem[Y2]{Y2}  Yuan, Y., Global solutions to special Lagrangian equations, Proc. Amer. Math. Soc. 134 (2006), no. 5, 1355-1358.

\end{thebibliography}
\end{document}